\def\ps@pprintTitle{%
  \let\@oddhead\@empty
  \let\@evenhead\@empty
  \def\@oddfoot{\reset@font\hfil\thepage\hfil}
  \let\@evenfoot\@oddfoot
}
\newtheorem{theorem}{Theorem}[section]
\newtheorem*{theorem:repeat}{\tref{butterflystab}}
\newtheorem*{theorem:repeatmain}{\tref{main}}
\newtheorem{lemma}[theorem]{Lemma}
\newtheorem{corollary}[theorem]{Corollary}
\newtheorem{prop}[theorem]{Proposition}
\newtheorem{conjecture}[theorem]{Conjecture}
\newtheorem{claim}[theorem]{Claim}
\newcommand\lref[1]{Lemma~\ref{lem:#1}}
\newcommand\tref[1]{Theorem~\ref{thm:#1}}
\newcommand\cref[1]{Corollary~\ref{cor:#1}}
\newcommand\pref[1]{Proposition~\ref{prop:#1}}
\newcommand\cC{{\mathcal C}}
\newcommand\cE{{\mathcal E}}
\newcommand\cF{{\mathcal F}}
\newcommand\cG{{\mathcal G}}
\newcommand\cH{{\mathcal H}}
\newcommand\cM{{\mathcal M}}
\newcommand\cS{{\mathcal S}}
\numberwithin{equation}{section}
\begin{document}
\begin{frontmatter}
\title{Supersaturation and stability for forbidden subposet problems}

\author{Bal\'azs Patk\'os}
\address{MTA--ELTE Geometric and Algebraic Combinatorics Research Group,\\ H--1117 Budapest, P\'azm\'any P.\ s\'et\'any 1/C, Hungary.  \\ Alfr\'ed R{\'e}nyi Institute of Mathematics, Hungarian Academy of Sciences \\ H-1053, Budapest, Re\'altanoda u. 13-15, Hungary\\
        {\tt email}: patkosb@cs.elte.hu, patkos.balazs@renyi.mta.hu}

\begin{abstract}
We address a supersaturation problem in the context of forbidden subposets. A family $\cF$ of sets is said to contain the poset $P$ if there is an injection $i:P \rightarrow \cF$ such that $p \le_P q$ implies $i(p) \subset i (q)$. The poset on four elements $a,b,c,d$ with $a,b \le c,d$ is called a butterfly. The maximum size of a family $\cF \subseteq 2^{[n]}$ that does not contain a butterfly is $\binom{n}{\lfloor n/2 \rfloor}+\binom{n}{\lfloor n/2 \rfloor+1}$ as proved by De Bonis, Katona, and Swanepoel. We prove that if $\cF \subseteq 2^{[n]}$ contains $\binom{n}{\lfloor n/2 \rfloor}+\binom{n}{\lfloor n/2 \rfloor+1}+E$ sets, then it has to contain at least $(1-o(1))E(\lceil n/2 \rceil +1)\binom{\lceil n/2\rceil}{2}$ copies of the butterfly provided $E\le 2^{o(n)}$. We show that this is asymptotically tight and for small values of $E$ we show that the minimum number of butterflies contained in $\cF$ is exactly $E(\lceil n/2 \rceil +1)\binom{\lceil n/2\rceil}{2}$.
\end{abstract}
\end{frontmatter}

\section{Introduction}

Many problems in extremal combinatorics deal with determining the maximum size $M$ that certain combinatorial structures can have provided they satisfy some prescribed property. Many properties are defined via forbidden substructures. Therefore, for any $M'>M$ it is natural to ask what is the minimum number of forbidden substructures that appear in a structure of size $M'$. Such problems are usually called \textit{supersaturation} or \textit{Erd\H os-Rademacher type} problems. The first such result is a strengthening of Mantel's theorem \cite{M} which states that a triangle-free graph on $n$ vertices may contain at most $\lfloor \frac{n^2}{4}\rfloor$ edges. Rademacher proved that any graph on $n$ vertices with $\lfloor \frac{n^2}{4}\rfloor+1$ edges contains at least $\lfloor n/2 \rfloor$ triangles and later Erd\H os \cite{E2} improved this to $t\cdot\lfloor n/2 \rfloor$ triangles for graphs with $\lfloor \frac{n^2}{4}\rfloor+t$ edges provided $t \le cn$, where $c$ is a sufficiently small positive constant. The general $r$-clique case was settled asymptotically (after the work of many researchers) by Reiher \cite{R}. A supersaturation phenomenon that holds for both graphs and hypergraphs was discovered by Erd\H os and Simonovits \cite{ES}: for any $k$-uniform hypergraph $F$ let $ex_k(n,F)$ denote the maximum number of hyperedges in a hypergraph $G$ on $n$ vertices that does not contain $F$. If a $k$-uniform hypergraph $H$ contains $ex_k(n,F)+cn^k$ hyperedges, then $H$ contains at least $c'n^{|V(F)|}$ copies of $F$.

One of the first appearances of supersaturation problems in extremal set theory was the minimization problem of the number of disjoint pairs of sets in a family $\cF \subseteq 2^{[n]}$ of size $m$. As observed by Erd\H os, Ko and Rado \cite{EKR} a maximum \textit{intersecting} family (a family of sets such that all pairwise intersections are non-empty) has size $2^{n-1}$, thus the problem of determining the minimum number of disjoint pairs is interesting if $m>2^{n-1}$. Ahlswede \cite{A} and Frankl \cite{F} independently proved that an optimal family must contain $\binom{[n]}{\ge l+1}$ and must be contained in $\binom{[n]}{\ge l}$ for the integer $l$ defined by $\sum_{i=l+1}^n\binom{n}{i}\le m \le \sum_{i=l}^n\binom{n}{i}$. As any subset $F$ of $[n]$ of size $l$ is disjoint from the same number of subsets of size at least $l+1$, this reduces the problem to finding an $l$-uniform family $\cF\subseteq \binom{[n]}{l}$ of size $m$ that minimizes the number of disjoint pairs in $\cF$. By the celebrated theorem of Erd\H os, Ko and Rado \cite{EKR}, a maximum intersecting family $\cF\subset \binom{[n]}{l}$ has size $\binom{n-1}{l-1}$ provided $2l \le n$ holds. Thus, the supersaturation problem is to minimize the number of disjoint pairs for uniform families of size $m$ where $m>\binom{n-1}{l-1}$. This was first addressed by Bollob\'as and Leader \cite{BL} and a major improvement was achieved recently by Das, Gan, and Sudakov \cite{DGS}. (The graph case $l=2$ was first solved by Ahlswede and Katona \cite{AK} in a different context.)

The first theorem in extremal set theory is that of Sperner \cite{S}. It states that a family $\cF \subseteq 2^{[n]}$ for which there does not exist a pair $F,G\in \cF$ with $F \subsetneq G$ can have size at most $\binom{n}{\lfloor n/2\rfloor}$. These families are called \textit{Sperner families}. A generalization is due to Erd\H os \cite{E1}: a \textit{$k$-Sperner family} is one that does not contain a chain $F_1 \subsetneq F_2 \subsetneq \dots \subsetneq F_{k+1}$ of length $k+1$. He proved that if $\cF\subseteq 2^{[n]}$ is $k$-Sperner, then $|\cF|\le \Sigma(n,k):=\sum_{i=1}^k\binom{n}{\lfloor \frac{n-k}{2}\rfloor+i}$ holds and also that the only $k$-Sperner families with this size are $\cup_{i=1}^k\binom{[n]}{\lfloor \frac{n-k}{2}\rfloor+i}$ and $\cup_{i=0}^{k-1}\binom{[n]}{\lceil \frac{n-k}{2}\rceil+i}$ (these two families coincide if $n+k$ is odd). We denote the set of these families by $\Sigma^*(n,k)$. The corresponding supersaturation problem (i.e. minimizing the number of $(k+1)$-chains for families $\cF\subseteq 2^{[n]}$ with $|\cF|=m>\Sigma(n,k)$) was addressed by Kleitman \cite{Kl} for the case $k=1$. For larger values of $k$, the problem was recently settled by Das, Gan, and Sudakov \cite{DGS} and independently by Dove, Griggs, Kang, and Sereni \cite{DGKS} for some range of family sizes.

Sperner's and Erd\H os's theorems can be formulated in the following more general context. Let $P$ be a finite poset. We say that a family $\cF$ of sets contains $P$ if there exists an injection $i: P \rightarrow \cF$ such that $p\le_P q$ implies $i(p) \subset i(q)$. A family $\cF$ is \textit{$P$-free} if it does not contain $P$. The size of a largest $P$-free family $\cF \subseteq 2^{[n]}$ is denoted by $La(n,P)$. Note that Erd\H os's theorem can be restated as $La(n,P_{k+1})=\Sigma(n,k)$, where $P_{k+1}$ denotes the total ordering or path on $k+1$ elements. There are not too many posets $P$ for which the exact value of $La(n,P)$ is known. One such example is the butterfly poset $B$ on four elements $a,b,c,d$ with $a\le_B c$, $a\le_Bd$, $b\le_B c$, $b\le_B d$.

\begin{theorem}[De Bonis, Katona, Swanepoel \cite{DKS}]
\label{thm:butt}
If $\cF \subseteq 2^{[n]}$ is a butterfly-free family, then $|\cF|\le \Sigma(n,2)$ holds and equality holds if and only if $\cF \in \Sigma^*(n,2)$.
\end{theorem}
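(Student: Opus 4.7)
The plan is to prove the LYM-type inequality $\sum_{F\in\cF}\binom{n}{|F|}^{-1}\le 2$; since this weight bound is saturated precisely by the families in $\Sigma^*(n,2)$ (of size $\Sigma(n,2)$), both the extremal bound and the uniqueness statement follow in one stroke. A preliminary observation: any four-chain $F_1\subsetneq F_2\subsetneq F_3\subsetneq F_4$ inside $\cF$ is itself a butterfly (take $a=F_1,b=F_2,c=F_3,d=F_4$: the four required strict inclusions follow by transitivity), so $\cF$ is $3$-Sperner. Applied directly, this yields only $\sum_F\binom{n}{|F|}^{-1}\le 3$ and $|\cF|\le\Sigma(n,3)$, so the full butterfly condition must be squeezed further.

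I will split $\cF=\cF_m\sqcup\cF_e$, where $\cF_m$ (the \emph{middle} members) consists of those $F\in\cF$ possessing both a strict subset and a strict superset in $\cF$, and $\cF_e=\cF\setminus\cF_m$. Butterfly-freeness yields two structural facts. First, $\cF_m$ is an antichain: given a comparable pair $F\subsetneq F'$ in $\cF_m$, choose any strict subset $F_-\in\cF$ of $F$ and any strict superset $F'_+\in\cF$ of $F'$ (these exist by the definition of $\cF_m$); the four-chain $F_-\subsetneq F\subsetneq F'\subsetneq F'_+$ is then a butterfly with bottoms $\{F_-,F\}$ and tops $\{F',F'_+\}$. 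Second, $\cF_e$ is $2$-Sperner, for a three-chain $F_1\subsetneq F_2\subsetneq F_3$ in $\cF_e$ would put $F_2$ into $\cF_m$, a contradiction. It follows additionally that any strict subset of $F\in\cF_m$ lying in $\cF$ must itself belong to $\min(\cF)\subseteq\cF_e$ (it cannot lie in $\cF_m$ by antichainness, and having a further strict subset in $\cF$ would place it back in $\cF_m$); symmetrically, strict supersets lie in $\max(\cF)\subseteq\cF_e$, so every middle element is bracketed by a three-chain $F_-\subsetneq F\subsetneq F_+$ with $F_-\in\min(\cF)$ and $F_+\in\max(\cF)$.

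The standard chain LYM argument separately gives $\sum_{F\in\cF_e}\binom{n}{|F|}^{-1}\le 2$ (from $2$-Sperner) and $\sum_{F\in\cF_m}\binom{n}{|F|}^{-1}\le 1$ (from antichain), summing to only $3$. To combine them into the sharp bound $\le 2$, I plan a joint counting across all $n!$ maximal chains of $2^{[n]}$: in a chain $\cC$ with $|\cF\cap\cC|=3$, the three members must be of type min/middle/max, and the butterfly condition, rephrased as ``any two distinct sets of $\cF$ share at most one common strict subset (equivalently, one common strict superset) in $\cF$'', forces both the bipartite inclusion graph between $\cF_m$ and $\max(\cF)$ and the one between $\cF_m$ and $\min(\cF)$ to be $K_{2,2}$-free. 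A discharging/double-counting scheme exploiting these constraints should show that the excess contributions of three-chains are offset by chains carrying at most one element of $\cF$, yielding $\sum_\cC|\cF\cap\cC|\le 2\,n!$ and hence the desired weight inequality.

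The main obstacle is precisely this last step. The naive assignment $F\mapsto\{F_-,F_+\}$ from $\cF_m$ into pairs inside $\cF_e$ is not an injection, since two incomparable middle sets can share bookends without directly creating a butterfly (one would need an \emph{additional} shared superset or subset). The remedy is a global double count that uses both $K_{2,2}$-freeness conditions simultaneously together with the antichain property of $\cF_m$; I expect this to be the technical core of the proof. For uniqueness, tightness in the final LYM inequality forces $\cF_m=\emptyset$ and $\cF=\cF_e$ to be a maximum $2$-Sperner family, so by the equality case of Erd\H os's theorem $\cF\in\Sigma^*(n,2)$.
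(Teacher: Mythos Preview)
The paper does not prove \tref{butt}; it is quoted from \cite{DKS}. That said, the paper's \lref{imprlym} establishes precisely the LYM-type inequality you are trying to reach (in fact a sharpening of it), and its short proof contains the single idea your proposal is missing.

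Your decomposition $\cF=\cF_m\sqcup\cF_e$, the antichain property of $\cF_m$, the $2$-Sperner property of $\cF_e$, and the plan of a joint maximal-chain count are all correct. The gap is that you stop at ``$K_{2,2}$-freeness'' of the bipartite inclusion graphs between $\cF_m$ and $\min(\cF)$, $\max(\cF)$, and then look for a global discharging. The structure is far more rigid than that: \emph{each $M\in\cF_m$ has a unique strict subset $F_{1,M}$ and a unique strict superset $F_{2,M}$ in $\cF$}. Indeed, if $M$ had two distinct supersets $F_+,F_+'\in\cF$, then together with any $F_-\in\cF$, $F_-\subsetneq M$, the quadruple $\{F_-,M,F_+,F_+'\}$ is already a butterfly (bottoms $F_-,M$; tops $F_+,F_+'$); dually for subsets. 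So injectivity of $M\mapsto(F_-,F_+)$ was never the issue---the point is that the \emph{image} is forced.

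With this observation the chain count is immediate (assuming $\emptyset,[n]\notin\cF$, a harmless reduction). Any maximal chain $\cC$ with $|\cF\cap\cC|=3$ contains a unique $M\in\cF_m$, and the other two members must be $F_{1,M}$ and $F_{2,M}$; hence the number of such chains through a given $M$ is at most $(|M|-1)!\,(n-|M|-1)!$. On the other hand, every maximal chain through $M$ that avoids both $F_{1,M}$ and $F_{2,M}$ meets $\cF$ only in $M$, and there are at least $(|M|-1)(|M|-1)!\cdot(n-|M|-1)(n-|M|-1)!$ of those. Since $2\le|M|\le n-2$, the deficit from the latter dominates the surplus from the former, giving $\sum_{F\in\cF}|F|!\,(n-|F|)!\le 2\,n!$ and hence $\sum_{F\in\cF}\binom{n}{|F|}^{-1}\le 2$. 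No $K_{2,2}$ double count or discharging is needed. Your outline of the equality case (forcing $\cF_m=\emptyset$, then invoking Erd\H os's characterisation of extremal $2$-Sperner families) is correct once this is in place.
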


In this paper we will address the problem of minimizing the number of butterflies contained in families $\cF \subseteq 2^{[n]}$ of fixed size $m>\Sigma(n,2)$. It will be more convenient to count different image sets of injections of $B$ to $\cF$ as \textit{copies} of $B$ instead of counting the number of injections. Also,the problems are equivalent as there are exactly four injections (the number of automorphisms of $B$) to any possible image set.

Note that whenever we add a set $G$ to a family $\cF \in \Sigma^*(n,2)$, the number of newly constructed butterflies in $\cF \cup \{G\}$ will be minimized if $G$ is ``closest to the middle". If $n=2k$ and $\cF=\binom{[n]}{k-1}\cup\binom{[n]}{k}$, then $G$ should be picked from $\binom{[n]}{k+1}$. In this case, if $F_1,F_2,F_3,G$ is a newly created butterfly, then $F_1,F_2 \subset F_3 \subset G$ must hold. If one adds a set $G$ to a family $\cF$ from $\Sigma^*(n,2)$ with $|G|>k+1$, then the number of butterflies with $F_1,F_2 \subset F_3 \subset G$, $F_i \in \cF$ is already larger than in the previous case. Thus, independently of parity, the minimum number of butterflies appearing when adding one new set to a family in $\Sigma^*(n,2)$ is $$f(n)=(\lceil n/2 \rceil +1)\binom{\lceil n/2\rceil}{2}.$$ Therefore, if adding $E$ new sets to a family $\cF \in \Sigma^*(n,2)$ we will have at least $E\cdot f(n)$ butterflies. Note that if $G_1,G_2  \in \binom{[n]}{k+1}$ are such that $|G_1 \cap G_2| \le k-1$, then there are no butterflies in $\cF \cup \{G_1,G_2\}$ that contain both $G_1$ and $G_2$. Thus it is possible to have only $E\cdot f(n)$ copies of butterfly as long as we can pick sets from $\binom{[n]}{k+1}$with this property. We summarize our findings in the following proposition.

\begin{prop}
\label{prop:constr} (a) If $\cS\subset\cF$ for some $\cS \in \Sigma^*(n,2)$, then $\cF$ contains at least $(|\cF|-\Sigma(n,2))f(n)$ copies of butterflies.

(b) If $\cF=\binom{[n]}{\lceil n/2\rceil-1}\cup\binom{[n]}{\lceil n/2\rceil} \cup \cE$ where $\cE \subset \binom{[n]}{\lceil n/2\rceil+1}$ such that $|E_1 \cap E_2|< \lceil n/2\rceil$ holds for all $E_1,E_2 \in \cE$, then $\cF$ contains exactly $|\cE|\cdot f(n)$ copies of butterflies.
\end{prop}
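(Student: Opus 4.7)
For part (a), the plan is to sum butterfly contributions over $G \in \cF \setminus \cS$: for each such $G$, let $B(G)$ denote the number of butterflies in $\cS \cup \{G\}$ that contain $G$. Since every such butterfly has exactly one element outside $\cS$, the butterflies counted by $B(G)$ for distinct $G$'s are pairwise disjoint, and it suffices to show $B(G) \ge f(n)$ for every $G \notin \cS$; summing then yields at least $(|\cF|-\Sigma(n,2))f(n)$ butterflies in $\cF$.

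Writing $\cS = \binom{[n]}{s} \cup \binom{[n]}{s+1}$ with $s \in \{\lfloor n/2\rfloor-1,\lfloor n/2\rfloor\}$, any $G \notin \cS$ satisfies $|G| \ge s+2$ or $|G| \le s-1$. If $|G| \ge s+2$, then $G$ exceeds every size in $\cS$ and must play a top role; combining strict containment with $|A|,|B|,|C| \in \{s,s+1\}$ for the other three sets forces a rigid ``sandwich'': the second top $Z \in \binom{[n]}{s+1}$ satisfies $Z \subset G$, and the two bottoms are distinct size-$s$ subsets of $Z$, giving $\binom{|G|}{s+1}\binom{s+1}{2} \ge (s+2)\binom{s+1}{2}$ butterflies. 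For $|G| \le s-1$ the analogous analysis pins $G$ as a bottom, forces the second bottom to equal $T_1 \cap T_2$ for tops $T_1,T_2 \in \binom{[n]}{s+1}$, and yields $\binom{n-|G|}{s-|G|}\binom{n-s}{2} \ge (n-s+1)\binom{n-s}{2}$ butterflies. A short case check for each admissible $s$ confirms that both $(s+2)\binom{s+1}{2}$ and $(n-s+1)\binom{n-s}{2}$ are at least $f(n)=(\lceil n/2\rceil+1)\binom{\lceil n/2\rceil}{2}$, establishing (a).

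For part (b), first note that $\binom{[n]}{\lceil n/2\rceil-1} \cup \binom{[n]}{\lceil n/2\rceil}$ is butterfly-free: two distinct sets of size $\lceil n/2\rceil$ share at most one subset of size $\lceil n/2\rceil-1$, so the two required bottoms would coincide. Each $E \in \cE$ then contributes exactly $f(n)$ butterflies made up of $E$ and three sets of $\cS$, by the equality case of (a) applied to $|G|=s+2$. It remains to exclude butterflies using two or more sets of $\cE$. Since $|E_1|=|E_2|=\lceil n/2\rceil+1$, two distinct $E_1,E_2 \in \cE$ must play the same role in a butterfly. As two bottoms, the tops would have to contain $E_1 \cup E_2$ whose size is at least $\lceil n/2\rceil+3$ (using $|E_1 \cap E_2| < \lceil n/2\rceil$), exceeding the maximum size $\lceil n/2\rceil+1$ in $\cF$. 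As two tops, every bottom lies in $E_1 \cap E_2$; but any set in $\cF$ has size at least $\lceil n/2\rceil-1 \ge |E_1 \cap E_2|$, forcing each bottom to equal $E_1 \cap E_2$ and ruling out two distinct bottoms. Hence the total butterfly count in $\cF$ is exactly $|\cE|f(n)$.

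The main obstacle is the rigidity step in (a): showing that the sandwich configuration is the \emph{only} way for three sets of $\cS$ to complete $G$ to a butterfly. The key fact used is that the two levels in $\cS$ are consecutive, which forces $Z \subset G$ (respectively the equality between the second bottom and $T_1 \cap T_2$) via a tight size inequality. Once that is established, the remaining arithmetic to check that the minima equal $f(n)$ is routine, and the arguments in (b) follow by matching the intersection bound in the hypothesis against the minimum set size in $\cF$.
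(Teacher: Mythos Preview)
Your argument is correct and follows essentially the same approach as the paper, which only sketches the proof in the paragraphs preceding the proposition. You have made the ``rigidity'' step explicit: for $|G|\ge s+2$ the two distinct $s$-subsets $X,Y$ of the second top $Z$ satisfy $X\cup Y=Z$, whence $Z\subset G$; and dually for $|G|\le s-1$ one gets $W=T_1\cap T_2\supset G$. This is exactly the mechanism the paper alludes to with ``$F_1,F_2\subset F_3\subset G$ must hold'', and your case check that the resulting minima equal $f(n)$ for each admissible $s$ fills in what the paper leaves to the reader.
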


It is known that it is possible to construct a family $\cE$ with the above property as long as the number of sets in $\cE$ is not more than $\frac{1}{n}\binom{n}{k+1}$: the families $\cE_j=\{E\in\binom{[n]}{k+1}: \sum_{i\in E}i \equiv j (\text{mod} n)\}$ all possess this property, therefore the largest among them must be of size at least $\frac{1}{n}\binom{n}{k+1}$. The main result of this paper states that this is best possible for all families of size $\Sigma(n,2)+E$, if $E$ is very small and asymptotically best possible, if $E$ is not that small.

\begin{theorem}
\label{thm:main} Let $\cF\subseteq 2^{[n]}$ be a family of sets with $|\cF|=\Sigma(n,2)+E$.

(a) If $E=E(n)$ satisfies $\log E=o(n)$, then the number of butterflies contained by $\cF$ is at least $(1-o(1))E\cdot f(n)$. 

(b) Furthermore, if $E \le \frac{n}{100}$, then the number of butterflies contained by $\cF$ is at least $E\cdot f(n)$.
\end{theorem}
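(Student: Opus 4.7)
The plan is to combine a stability strengthening of Theorem~\ref{thm:butt} with the direct construction count of Proposition~\ref{prop:constr}.

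The first step is to establish a quantitative stability statement: if $|\cF|=\Sigma(n,2)+E$ and $B(\cF)\le(1-o(1))E\,f(n)$, where $B(\cF)$ denotes the number of butterflies, then $|\cF\triangle \cS|=o(E)$ for some $\cS\in\Sigma^*(n,2)$. The starting point is the structural characterization underlying Theorem~\ref{thm:butt}: any butterfly-free $\cF$ satisfies $\cF=\cF^+\cup\cF^-$, where $\cF^+=\{F\in\cF:U_\cF(F)\le 1\}$ and $\cF^-=\{F\in\cF:D_\cF(F)\le 1\}$, with $U_\cF(F)$ and $D_\cF(F)$ the numbers of proper supersets and subsets of $F$ in $\cF$. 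This inclusion follows because any $F$ with both $U_\cF(F)\ge 2$ and $D_\cF(F)\ge 2$ witnesses $\binom{D_\cF(F)}{2}\binom{U_\cF(F)}{2}$ butterflies. For a general $\cF$, the number of sets violating $F\in\cF^+\cup\cF^-$ is thus controlled by $B(\cF)$, and one adapts the chain-counting argument of De Bonis--Katona--Swanepoel---which bounds $|\cF^+\cup\cF^-|\le\Sigma(n,2)$ in the butterfly-free case---to track how the bound degrades as a function of $B(\cF)$.

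For part (a), the stability combined with Proposition~\ref{prop:constr}(a) yields the desired $(1-o(1))E\,f(n)$ bound, since an $o(E)$-perturbation of $\cS$ modifies the butterfly count by $o(E\,f(n))$. For part (b), the smallness $E\le n/100$ is used to upgrade the stability to an exact inclusion $\cS\subseteq\cF$. Indeed, if instead $\cS\setminus\cF\ne\emptyset$, then $|\cF\setminus\cS|=E+|\cS\setminus\cF|$, and a direct computation shows that each missing set in $\cS$ decreases the butterflies contributed by a single extra by only $O(n)$, which is far less than $f(n)=\Theta(n^3)$. Consequently the extras over-compensate and $B(\cF)\ge E\,f(n)$ still holds, contradicting the assumption.

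I expect the main obstacle to be calibrating the stability inequality so that the constant $f(n)=(\lceil n/2\rceil+1)\binom{\lceil n/2\rceil}{2}$ appears with the correct value. This number counts butterflies in the very specific configuration of Proposition~\ref{prop:constr}(b): the top pair consists of a size-$(\lceil n/2\rceil+1)$ extra set $G$ together with one of its $\lceil n/2\rceil+1$ size-$\lceil n/2\rceil$ subsets $Y$, and the bottom pair consists of two $(\lceil n/2\rceil-1)$-subsets of $Y$. The chain argument must be arranged so that alternative placements of an extra---at levels further from the middle, or with substantial overlap with other extras---each generate at least $f(n)$ butterflies per extra, with equality attained only at the configuration of Proposition~\ref{prop:constr}(b).
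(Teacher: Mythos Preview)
Your plan has the right two-phase shape (stability, then count), but both phases omit the step that carries the actual weight in the paper, and one formulation is impossible as written.

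First, the stability conclusion $|\cF\triangle\cS|=o(E)$ cannot hold: since $|\cF|=|\cS|+E$ we always have $|\cF\triangle\cS|\ge E$. What is needed is $|\cS\setminus\cF|=m$ small, i.e.\ $m=O(f(n)E)$ in the first instance. The paper does \emph{not} extract this from the $\cF^{+}\cup\cF^{-}$ decomposition you describe; instead it proves a stability theorem for butterfly-\emph{free} families (\tref{butterflystab}) via Kruskal--Katona/Lov\'asz shadow bounds (\lref{spernerstab}) together with an LYM-type penalty for $3$-chains (\lref{imprlym}, \cref{butt}), and then applies it to a maximum butterfly-free subfamily $\cF'\subseteq\cF$. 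Your chain-counting sketch does not indicate how to recover either ingredient.

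Second, and more seriously, once $m$ is small you still must show that almost all of the $E+m$ sets in $\cF\setminus\cS$ individually generate $(1-o(1))f(n)$ butterflies with partners in $\cF\cap\cS$. This is not automatic: an extra $G\in\binom{[n]}{\lceil n/2\rceil+1}$ might have a large fraction of its $\lceil n/2\rceil$-shadow among the $m$ missing sets, in which case it contributes far fewer than $f(n)$ butterflies of the intended shape. Bounding the number of such ``bad'' extras by $o(m)$ is the content of \tref{wrongsets}, proved via an edge-isoperimetric inequality in the Johnson graph $H(n,k)$ (\pref{isoperi}); your line ``an $o(E)$-perturbation of $\cS$ modifies the butterfly count by $o(E\,f(n))$'' is precisely where this difficulty is hidden. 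For part~(b) the paper needs a sharper non-asymptotic version (\cref{wrongsetsplus}) and a four-case analysis in $m$; note also that your ``$O(n)$ per missing set per extra'' estimate is correct only when the missing set sits at level $\lceil n/2\rceil-1$, while a missing set at level $\lceil n/2\rceil$ costs $\binom{\lceil n/2\rceil}{2}=\Theta(n^{2})$ per extra above it.
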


One of our major tools in proving \tref{main} will be the following stability version of \tref{butt}

\begin{theorem}
\label{thm:butterflystab} Let $m$ be a non-negative integer with $m\le \binom{\frac{2n}{3}-1}{\lceil n/2\rceil}$ and let $\cF \subseteq 2^{[n]}$ be a butterfly-free family such that $|\cF \setminus \cF^*|\ge m$ for every $\cF^*\in \Sigma^*(n,2)$. Then the inequality $|\cF| \le \Sigma(n,2)-\frac{m}{4}$ holds if $n$ is large enough.
\end{theorem}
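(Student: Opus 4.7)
The plan is to refine the De Bonis--Katona--Swanepoel proof of \tref{butt} by carrying along a quantitative stability term and then lower-bounding that term by $m/4$. Set $k=\lfloor n/2\rfloor$, pick $\cF^*\in\Sigma^*(n,2)$ minimising $|\cF\setminus\cF^*|$, and write $\cE=\cF\setminus\cF^*$ and $\cE'=\cF^*\setminus\cF$. The hypothesis reads $|\cE|\ge m$, and since $|\cF|=\Sigma(n,2)+|\cE|-|\cE'|$, what we must show is $|\cE'|\ge|\cE|+m/4$.

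I would begin with the structural observation behind the DKS proof: in any butterfly-free $\cF$, each $F\in\cF$ possessing both a strict $\cF$-subset and a strict $\cF$-superset has either a unique subset or a unique superset in $\cF$, since two of each would already form a butterfly. Assigning each such ``middle'' element to its distinguished neighbour partitions $\cF$ into ``brooms'' of size at most three, and a broom-by-broom reworking of the DKS argument yields an inequality of the form $|\cF|\le\Sigma(n,2)-\Psi(\cF)$, where $\Psi(\cF)\ge 0$ is a non-negative defect that vanishes exactly when $\cF$ consists of two full consecutive central levels, and grows both with the occupancy of off-centre levels and with gaps inside the central two levels.

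Next I would split $\cE=\cE^{\mathrm{far}}\cup\cE^{\mathrm{near}}$ according to whether a set's level differs from $k$ by more or less than $\sqrt{Cn\log m}$ for a suitable constant $C$. Using $\log m=o(n)$ and standard binomial-tail estimates, each $G\in\cE^{\mathrm{far}}$ satisfies $\binom{n}{k}/\binom{n}{|G|}\ge m^{100}$, and a single such $G$ contributes so much to $\Psi(\cF)$ that the conclusion $|\cF|\le\Sigma(n,2)-m/4$ follows immediately. So we may assume $\cE^{\mathrm{far}}=\emptyset$ and $\cE$ is entirely concentrated on a narrow band of near-central levels outside the two levels of $\cF^*$ (and, for even $n$, also outside the two levels of the other extremal template).

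The main obstacle is to promote this near-central constraint to the precise inequality $|\cE'|\ge(5/4)|\cE|$, equivalent to $|\cE'|\ge|\cE|+m/4$. The local mechanism is this: if $G\in\cE$ has level $k-1$ (the case $k+2$ is symmetric), then for any two $k$-supersets $B_1,B_2$ of $G$ that remain in $\cF$, the butterfly-free condition forbids any two common $(k+1)$-supersets of $B_1$ and $B_2$ from simultaneously surviving in $\cF$, so many $(k+1)$-sets are driven into $\cE'$. Encoding these restrictions as a fractional covering problem on the bipartite containment graph between $\cE$ and $\cE'$, and exploiting that the constraints from different $G$'s overlap but are not identical, a careful averaging or LP-duality argument should yield the $5/4$ ratio. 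Handling the overlap so as not to lose the constant $1/4$ --- in particular, ensuring that each set of $\cE'$ is never over-counted by too large a fractional weight across the $G$'s that force it --- is the crux of the proof.
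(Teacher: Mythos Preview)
Your proposal has genuine gaps at precisely the places where the work lies.

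First, the ``local mechanism'' you describe is vacuous as stated. If $G\in\cE$ has level $k-1$ and $B_1,B_2\in\cF\cap\binom{[n]}{k}$ both contain $G$, then $B_1\cap B_2=G$, so $|B_1\cup B_2|=k+1$ and $B_1\cup B_2$ is the \emph{unique} common $(k+1)$-superset of $B_1$ and $B_2$. There are never ``two common $(k+1)$-supersets'' to forbid, so your argument forces nothing into $\cE'$ at level $k+1$. You could try to salvage this by allowing the supersets at higher levels, but then the sets you exclude no longer live in $\cF^*$, so they do not contribute to $\cE'$. The connection between $\cE$ and $\cE'$ that you need is simply not the one you wrote.

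Second, and more fundamentally, you explicitly leave the decisive step --- obtaining $|\cE'|\ge(5/4)|\cE|$ via a ``careful averaging or LP-duality argument'' --- as a hope rather than an argument. Everything before that (the broom picture, the far/near split) is scaffolding; the quantitative content is entirely in this step, and you have not supplied it. The ``broom-by-broom reworking'' yielding $|\cF|\le\Sigma(n,2)-\Psi(\cF)$ is also never made precise: you do not say what $\Psi$ is, nor prove the inequality, nor verify that $\Psi$ actually grows linearly in the relevant quantities.

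For contrast, the paper's route avoids this direct bookkeeping altogether. It introduces $\cM=\{M\in\cF:\exists F,F'\in\cF,\ F\subsetneq M\subsetneq F'\}$ and splits into two cases. If $|\cM|\ge m/2$, a sharpened LYM-type inequality for butterfly-free families (each $M\in\cM$ has \emph{unique} sub- and superset in $\cF$, which lets one penalise the chains through $M$ in the chain count) gives $|\cF|\le\Sigma(n,2)-|\cM|/3$. If $|\cM|<m/2$, then $\cF\setminus\cM$ is genuinely $2$-Sperner and still $m/2$-far from every $\cF^*\in\Sigma^*(n,2)$, and a separate stability lemma for $2$-Sperner families (proved via shadows, shades, and the Lov\'asz form of Kruskal--Katona) finishes. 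The point is that the two obstructions --- long chains and off-centre antichains --- are handled by two different tools, rather than by a single combined defect functional $\Psi$ whose existence you are asserting but not establishing.
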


Note that the conditions of \tref{main} and \tref{butterflystab} imply that for any fixed positive $\varepsilon$ both $m$ and $E$ are not larger than $\binom{\varepsilon n}{\varepsilon n/2}$ provided $n$ is large enough.

The rest of the paper is organized as follows. In Section 2, we prove \tref{butterflystab}, then in Section 3 we prove \tref{main}. Section 4 contains some concluding remarks.

\section{Stability of maximum butterfly-free families}

In this section we prove \tref{butterflystab}, a stability version of \tref{butt}. As butterfly-free families of maximum size possess the 2-Sperner property, the proof of \tref{butterflystab} consists of two steps: we first establish a stability result (\lref{spernerstab}) on 2-Sperner families and then we consider how the number of 3-chains in a butterfly-free family $\cF$ affect the size of $\cF$. Note that if $n$ is odd, then \lref{spernerstab} would easily follow from the characterization of 1-Sperner families as in that case the unique family in $\Sigma^*(n,2)$ consists of two Sperner families of equal maximum size. However, if $n$ is even we need a little work to obtain the same result. We start with stating the celebrated LYM-inequality \cite{Bol, Lub, Mes, Y}. This was originally stated for Sperner families, but using the fact that any $k$-Sperner family can be decomposed into $k$ antichains, the statement generalizes easily to $k$-Sperner families. As we will not need the result in its full generality, we state it in the case $k=2$. (For more on Sperner properties and the LYM-inequality, see \cite{E}.)

\begin{theorem}[LYM-inequality for 2-Sperner families]
\label{thm:lym}
If $\cF \subseteq 2^{[n]}$ is a 2-Sperner family, then the inequality
\[
\sum_{F\in\cF}\frac{1}{\binom{n}{|F|}}\le 2
\]
holds.
\end{theorem}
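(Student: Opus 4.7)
The approach is the classical LYM chain-counting argument of Lubell/Yamamoto/Meshalkin: interpret the sum $\sum_{F \in \cF} 1/\binom{n}{|F|}$ as the expected number of sets of $\cF$ on a uniformly random maximal chain in $2^{[n]}$, and then use the 2-Sperner property to bound this expectation pointwise.

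First I would set up the probability space: pick a uniformly random permutation $\pi$ of $[n]$ and let $C_i=\{\pi(1),\ldots,\pi(i)\}$ for $i=0,1,\ldots,n$, so that $\emptyset=C_0\subsetneq C_1\subsetneq\cdots\subsetneq C_n=[n]$ is a uniformly random maximal chain. For any fixed $F\subseteq[n]$, the event $F=C_{|F|}$ occurs exactly when $\pi$ lists the elements of $F$ first in some order, which has probability $|F|!(n-|F|)!/n!=1/\binom{n}{|F|}$. By linearity of expectation the expected size of $\cF\cap\{C_0,\ldots,C_n\}$ is therefore exactly $\sum_{F\in\cF} 1/\binom{n}{|F|}$.

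Next I would invoke the 2-Sperner hypothesis. Any two sets appearing in a fixed maximal chain are comparable under inclusion, so for every chain the intersection $\cF\cap\{C_0,\ldots,C_n\}$ is itself a chain inside $\cF$; since $\cF$ contains no chain of length $3$, this intersection has size at most $2$. Thus the random variable $|\cF\cap\{C_0,\ldots,C_n\}|$ is bounded by $2$ with probability one, hence its expectation is at most $2$, which is exactly the claimed inequality.

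The argument is standard and presents no real obstacle; the only thing that needs care is the elementary probability computation identifying $1/\binom{n}{|F|}$ as the probability that $F$ lies on the random chain. I would present it in this two-step form (expectation identity, then deterministic pointwise bound) since this is the same template that underlies the proofs of both Erd\H{o}s's $k$-Sperner theorem and its extremal characterization used implicitly later in the paper.
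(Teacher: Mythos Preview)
Your proof is correct and is the standard Lubell chain-counting argument. The paper itself states this theorem as a known result and gives no proof; however, exactly the same double-counting of pairs $(F,\cC)$ with $F\in\cF\cap\cC$ over maximal chains $\cC$ is carried out explicitly in the paper's proof of \lref{imprlym}, so your approach is entirely in line with the paper's methods.
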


\begin{corollary}
\label{cor:weakstab}
Let $\cF \subseteq 2^{[n]}$ be a 2-Sperner family such that one of the following holds:

(a) $n$ is odd and the number of sets $|\{G \notin \cF: |G|=\lceil n/2\rceil \text{or}\ |G|=\lfloor n/2\rfloor\}|$ is at least $m$,
 
(b) $n$ is odd and the number of sets $|\{F \in \cF: |F|\neq \lceil n/2\rceil, \lfloor n/2\rfloor\}|$ is at least $m$,

(c) $n$ is even and the number of sets $|\{G \notin \cF: |G|= n/2\}|$ is at least $m$,

(d) $n$ is even and the number of sets $|\{F \in \cF: |F|\neq  n/2-1, n/2, n/2+1\}|$ is at least $m$.
 
Then we have the inequality $|\cF|\le \Sigma(n,2)-\frac{1.9m}{n}$.
\end{corollary}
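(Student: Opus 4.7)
The plan is to deduce all four estimates from the LYM inequality (\tref{lym}) via a short linear-optimization argument that exploits the fact that $1/\binom{n}{k}$ is minimized at $k=\lfloor n/2\rfloor$ and grows as $k$ moves away from the middle. First I would write $a_j=|\cF\cap\binom{[n]}{j}|$ and reinterpret \tref{lym} as the single constraint $\sum_j a_j/\binom{n}{j}\le 2$ on a linear program with objective $\sum_j a_j=|\cF|$ and box constraints $0\le a_j\le\binom{n}{j}$. Without further conditions this LP has optimum $\Sigma(n,2)$, attained by filling two adjacent middle levels; each of the four hypotheses adds one further linear constraint, and I would then read off the claimed deficit by recomputing the LP optimum.

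In cases (a) and (c), where middle sets are missing, the extra constraint is a cap $a_{\lfloor n/2\rfloor}+a_{\lceil n/2\rceil}\le 2\binom{n}{\lfloor n/2\rfloor}-m$ in case (a) and $a_{n/2}\le\binom{n}{n/2}-m$ in case (c). Saturating this cap uses LYM budget $2-m/\binom{n}{\lfloor n/2\rfloor}$, and the remaining $m/\binom{n}{\lfloor n/2\rfloor}$ of budget must be spent on a layer strictly farther from the middle, which yields fewer than one set per unit of middle-layer budget. A direct calculation gives a deficit of at least $m\bigl(1-\binom{n}{\lfloor n/2\rfloor-1}/\binom{n}{\lfloor n/2\rfloor}\bigr)$, which equals $4m/(n+3)$ in case (a) and $2m/(n+2)$ in case (c); both comfortably exceed $1.9m/n$ for $n$ large.

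In cases (b) and (d), where $\cF$ already contains many ``outer'' sets, each such set contributes at least $1/\binom{n}{\lceil n/2\rceil+1}$ (case (b)) or $1/\binom{n}{n/2+2}$ (case (d)) to the LYM sum while adding only one to $|\cF|$; the same LYM mass redistributed to a middle layer would have produced a much larger ratio of sets to budget. Quantifying this, I read off a loss of at least $m\bigl(\binom{n}{\lfloor n/2\rfloor}/\binom{n}{\lceil n/2\rceil+1}-1\bigr)$ in case (b) and $m\bigl(\binom{n}{n/2}/\binom{n}{n/2+2}-1\bigr)$ in case (d), both again exceeding $1.9m/n$ by a comfortable margin.

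The only real bookkeeping difficulty is the even-$n$ situation: the extremal butterfly-free configurations now live on three consecutive levels $n/2-1,n/2,n/2+1$ and come from two distinct maximum 2-Sperner families. This is why part (c) penalises only the genuinely-middle layer $n/2$ (producing the weaker constant $2/(n+2)$), whereas part (d) penalises presence outside the whole three-level block. Once the linear program is set up correctly in each of the four regimes the computation is mechanical, and the constant $1.9/n$ has enough slack to absorb any small-$n$ corrections implicit in the later use of the Corollary.
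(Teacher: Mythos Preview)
Your approach is essentially the same as the paper's: both deduce the corollary directly from the LYM inequality by comparing the weight $1/\binom{n}{|F|}$ of a set at a non-optimal level to the weight at the nearest optimal level. The paper states this in one line via the ratio of consecutive LYM terms, while you phrase it as a small linear program; the content is identical.

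There is one slip in your case~(d). When you ``redistribute'' the LYM mass of an outer set back toward the middle, you compare to level $n/2$ and read off a deficit of $m\bigl(\binom{n}{n/2}/\binom{n}{n/2+2}-1\bigr)$. But level $n/2$ carries the box constraint $a_{n/2}\le\binom{n}{n/2}$, and in the optimal LP solution it is already saturated; the freed mass can only be absorbed at level $n/2\pm 1$. The correct comparison is therefore between $1/\binom{n}{n/2\pm 2}$ and $1/\binom{n}{n/2\pm 1}$, yielding a deficit of
\[
m\Bigl(\tfrac{\binom{n}{n/2-1}}{\binom{n}{n/2-2}}-1\Bigr)=m\cdot\frac{3}{n/2-1}=\frac{6m}{n-2},
\]
which is exactly the ratio the paper uses for~(d). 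Your stated deficit $\approx 8m/n$ is larger and does not follow from your LP (indeed the LP optimum under hypothesis~(d) is $\Sigma(n,2)-6m/(n-2)$). Fortunately $6/(n-2)\ge 1.9/n$ for $n$ moderately large, so the conclusion survives once the comparison level is corrected.
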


\begin{proof}
Parts (a),(b),(c) immediately follow from the LYM-inequality and from the fact that the ratio of the two smallest possible terms in the Lubell-function is 
$$\binom{n}{\lfloor n/2\rfloor-1}^{-1}/\binom{n}{\lfloor n/2\rfloor}^{-1}=\frac{\lfloor n/2\rfloor+1}{\lfloor n/2\rfloor}\ge 1+\frac{1.9}{n}.$$
To obtain (d), observe that the ratio of the second and third smallest possible terms in the Lubell-function is
$$\binom{n}{n/2-2}^{-1}/\binom{n}{n/2-1}^{-1}=\frac{n/2+2}{n/2-1}\ge 1+\frac{1.9}{n}. \hfill  \qedhere $$
\end{proof}

We continue with introducing the notions of \textit{shadow} and \textit{shade}. If $\cF$ is a family of sets, then its $k$-shadow is $\Delta_k(\cF)=\{G: |G|=k, \exists F \in\cF \hskip 0.2truecm G \subset F\}$. To define the $k$-shade of the family we have to assume the existence of an underlying set, say $\cF \subseteq 2^{[n]}$. If so, then the $k$-shade is defined as $\nabla_k(\cF)=\{G \in \binom{[n]}{k}: \exists F \in \cF\hskip 0.2truecm F \subset G\}$. The well-known theorem of Kruskal \cite{Kr} and Katona \cite{Kat} states which family $\cF$ of $k$-sets minimizes the size of $\Delta_{k-1}(\cF)$ among all families of $m$ sets. For calculations the following version happens to be more useful than the precise result. To state the theorem we need to introduce the polynomial $\binom{x}{k}=\frac{x\cdot(x-1)\cdot ... \cdot(x-k+1)}{k!}$.

\begin{theorem}[Lov\'asz, \cite{L}]
\label{thm:shadow}
Let $\cG$ be a family of $k$-sets and let $x\ge k$ be the real number such that $\binom{x}{k}=|\cG|$ holds. Then the family of shadows satisfies $|\Delta_{k-1}(\cG)| \ge \binom{x}{k-1}$
\end{theorem}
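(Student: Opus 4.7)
The plan is to prove Lov\'asz's shadow bound by induction on $k$, with an inner induction on $|\cG|$, after first reducing to a shifted family.

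The base case $k=1$ is immediate: any nonempty family $\cG$ of singletons has $\Delta_0(\cG)=\{\emptyset\}$, so $|\Delta_0(\cG)|=1=\binom{x}{0}$ for every real $x\ge 1$.

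For the inductive step, I would first reduce to the shifted case. For $i<j$ the $(i,j)$-compression $S_{ij}$ replaces $j$ by $i$ inside each set whenever this does not cause a collision; this operation preserves $|\cG|$ and cannot increase $|\Delta_{k-1}(\cG)|$. Iterating over all pairs, we may assume $\cG$ is shifted. Let $n$ be the largest element appearing in any set of $\cG$, and split
\[
\cG = \cA' \sqcup \cB, \qquad \cA'=\{G\in\cG:n\in G\},\ \cB=\{G\in\cG:n\notin G\},
\]
with $\cA=\{G\setminus\{n\}:G\in\cA'\}$ a family of $(k-1)$-sets on $[n-1]$. A short check using the shift condition applied at $n$ (pushing $n$ down to any unused smaller element) gives $\cA\subseteq\Delta_{k-1}(\cB)$, and separating shadow sets that do and do not contain $n$ yields the lower bound
\[
|\Delta_{k-1}(\cG)| \ge |\cA| + |\Delta_{k-2}(\cA)|,
\]
since the shadow sets containing $n$ are in bijection with $\Delta_{k-2}(\cA)$ while $\cA$ is contained in the shadow sets not containing $n$.

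Now split into two cases according to the size of $\cA$. If $|\cA|\ge\binom{x-1}{k-1}$, apply the outer induction to $\cA$ at level $k-1$ to obtain $|\Delta_{k-2}(\cA)|\ge\binom{x-1}{k-2}$; combined with Pascal's identity $\binom{x-1}{k-1}+\binom{x-1}{k-2}=\binom{x}{k-1}$ this finishes the argument. If instead $|\cA|<\binom{x-1}{k-1}$, then $|\cB|>\binom{x-1}{k}$, and the inner induction applied to $\cB$ (a strictly smaller family on $[n-1]$) gives $|\Delta_{k-1}(\cB)|\ge\binom{z}{k-1}$, where $\binom{z}{k}=|\cB|$ forces $z>x-1$. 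The proof then uses the sharper identity $|\Delta_{k-1}(\cG)|=|\Delta_{k-1}(\cB)|+|\Delta_{k-2}(\cA)|$, valid since $\cA\subseteq\Delta_{k-1}(\cB)$.

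The main obstacle is this second case: writing $\binom{y}{k-1}=|\cA|$, one must verify the real-variable inequality
\[
\binom{y}{k-2}+\binom{z}{k-1}\ \ge\ \binom{x}{k-1}
\]
under the constraint $\binom{y}{k-1}+\binom{z}{k}=\binom{x}{k}$ and the regime $y<x-1<z$ coming from the case assumption. The analysis hinges on the monotonicity of $\binom{t}{k-1}/\binom{t}{k}=k/(t-k+1)$ as a function of real $t\ge k$: the marginal gain in $\binom{z}{k-1}$ per unit increase of $\binom{z}{k}$ dominates the marginal loss in $\binom{y}{k-2}$ per unit decrease of $\binom{y}{k-1}$ precisely because $z>x-1>y$, and Pascal at the balance point $y=z=x-1$ pins down the extremal value. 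This polynomial-calculus step, treating $\binom{t}{k}=t(t-1)\cdots(t-k+1)/k!$ as a continuous function rather than an integer-binomial identity, is the delicate part of the proof.
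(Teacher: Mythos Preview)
The paper does not give its own proof of this statement: \tref{shadow} is quoted from Lov\'asz's book \cite{L} as a known result and used as a black box, so there is nothing in the paper to compare your argument against.

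That said, your outline follows the standard compression-plus-induction route, and the setup through the decomposition $|\Delta_{k-1}(\cG)|=|\Delta_{k-1}(\cB)|+|\Delta_{k-2}(\cA)|$ together with $\cA\subseteq\Delta_{k-1}(\cB)$ is correct. Case~1 is fine. The difficulty you flag in Case~2 is real, but your justification for the inequality
\[
\binom{y}{k-2}+\binom{z}{k-1}\ \ge\ \binom{x}{k-1}
\qquad\text{subject to}\qquad
\binom{y}{k-1}+\binom{z}{k}=\binom{x}{k},\ \ y<x-1<z,
\]
does not go through as written. The quantity you invoke, the ratio $\binom{t}{k-1}/\binom{t}{k}=k/(t-k+1)$, is not the same as the marginal rate $d\binom{t}{k-1}/d\binom{t}{k}$ that actually governs the trade-off; moreover, the two marginals you need to compare live at different levels ($k-1$ versus $k$ on the $z$ side, $k-2$ versus $k-1$ on the $y$ side), so monotonicity of a single ratio in $t$ cannot by itself pin down the sign. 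One clean way to close the gap is to show that $\phi_j(s):=\binom{t}{j-1}$, viewed as a function of $s=\binom{t}{j}$, is concave, and then compare the derivatives $\phi_k'$ and $\phi_{k-1}'$ at the balance point $y=z=x-1$ together with the endpoint $y\to k-2$; alternatively, several textbook proofs reorganise the induction so that Case~2 never arises. As it stands, the ``delicate part'' is asserted rather than proved.
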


We will apply \tref{shadow} in a slightly more general setting. If $\cF \subset \binom{[n]}{l}$ with $l> n/2$, then a simple double counting argument and Hall's theorem show that there exists a matching from $\cF$ to $\Delta_{l-1}(\cF)$ such that if $F \in\cF$ and $G \in \Delta_{l-1}(\cF)$ are matched, then $G \subset F$. Using this observation and \tref{shadow} one obtains the following lemma. Part (ii) of the statement follows from the fact that $G \subset F \subset [n]$ holds if and only if $[n]\setminus F \subset [n] \setminus G$.

\begin{lemma}
\label{lem:genshadow}
(a) Let $\cG \subseteq \binom{[n]}{\ge k}$ be a Sperner family with $\lfloor n/2\rfloor \le k$ and let $x\ge k$ be the real number such that $\binom{x}{k}=|\cG|$ holds. Then the family of shadows satisfies $|\Delta_{k-1}(\cG)| \ge \binom{x}{k-1}$.

(b) Let $\cG \subseteq \binom{[n]}{\le k}$ be a Sperner family with $\lceil n/2\rceil \ge k$ and let $x\ge k$ be the real number such that $\binom{x}{n-k}=|\cG|$ holds. Then the family of shades satisfies $|\nabla_{k+1}(\cG)| \ge \binom{x}{n-k-1}$.
\end{lemma}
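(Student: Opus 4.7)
I would prove part~(i) by a standard level-by-level compression that replaces $\cG$ by a $k$-uniform family $\cG^*$ of the same cardinality whose $(k-1)$-shadow is contained in that of $\cG$, and then invoke \tref{shadow}. Part~(ii) would follow from (i) by complementation, exactly as the paper already hints.

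For the compression step in (i), the hypothesis $k\ge\lfloor n/2\rfloor$ forces every level $l>k$ to satisfy $l\ge\lceil(n+1)/2\rceil$, so the double-counting argument indicated just before the statement (each $l$-set has $l$ subsets of size $l-1$, each $(l-1)$-set lies in $n-l+1\le l$ sets of size $l$) verifies Hall's condition and produces an injection $\phi_l:\binom{[n]}{l}\to\binom{[n]}{l-1}$ with $\phi_l(F)\subset F$. I would then iterate as follows: let $l$ be the largest set-size currently attained in the family; if $l>k$, let $\cG_l$ be its sets of size $l$ and form $\cG':=(\cG\setminus\cG_l)\cup\phi_l(\cG_l)$. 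Two properties need checking. First, $\cG'$ is Sperner: the sets added all have size $l-1$; a strict inclusion $F'\subsetneq\phi_l(F)$ with $F'\in\cG\setminus\cG_l$ would give $F'\subsetneq F$ and contradict Sperner-ness of $\cG$, while a strict inclusion $\phi_l(F)\subsetneq F'$ with $F'\in\cG\setminus\cG_l$ would force $|F'|\ge l$, contradicting the choice of $l$ as the maximum size (since $F'\notin\cG_l$). Second, since $\phi_l(F)\subset F$, we have $\Delta_{k-1}(\cG')\subseteq\Delta_{k-1}(\cG)$. Iterating strictly decreases the maximum size in the family, so after finitely many steps we obtain $\cG^*\subseteq\binom{[n]}{k}$ with $|\cG^*|=|\cG|$ and $\Delta_{k-1}(\cG^*)\subseteq\Delta_{k-1}(\cG)$. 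Applying \tref{shadow} to $\cG^*$ then gives $|\Delta_{k-1}(\cG)|\ge|\Delta_{k-1}(\cG^*)|\ge\binom{x}{k-1}$.

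For part~(ii) I would pass to the complement family $\cG^c:=\{[n]\setminus F:F\in\cG\}$. Complementation reverses inclusion, so $\cG^c$ is Sperner; and since $\lceil n/2\rceil\ge k$ we have $n-k\ge\lfloor n/2\rfloor$, so every member of $\cG^c$ has size at least $n-k$. Thus $\cG^c$ satisfies the hypothesis of (i) with parameter $n-k$, and $|\cG^c|=|\cG|=\binom{x}{n-k}$, so part~(i) yields $|\Delta_{n-k-1}(\cG^c)|\ge\binom{x}{n-k-1}$. The bijection $G\leftrightarrow[n]\setminus G$ between $\nabla_{k+1}(\cG)$ and $\Delta_{n-k-1}(\cG^c)$ then gives the desired bound. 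The only delicate point in the whole argument is preservation of Sperner-ness during the compression in (i), and that is precisely why one must push down from the \emph{largest} surviving level at each step rather than from an arbitrary level above $k$.
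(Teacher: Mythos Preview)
Your argument is correct and follows the same route the paper sketches in the paragraph immediately preceding the lemma: push the family down level by level via a Hall matching until it is $k$-uniform, then invoke \tref{shadow}; derive (ii) from (i) by complementation. You have simply fleshed out the details the paper omits, in particular the verification that Sperner-ness is preserved at each compression step (and hence that the pushed-down sets are genuinely new, keeping the cardinality fixed), which the paper does not mention explicitly.
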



Let us define the following functions of $l$ and $m$. Let $x=x(l,m)$ be defined by the equation $\binom{x}{l}=m$ and write $g(l,m)=\binom{x}{l-1}-\binom{x}{l}$. According to \lref{genshadow}, if $\cG \subseteq \binom{[n]}{\ge l}$ is a Sperner family with $|\cG|=m$ and $l\ge \lfloor n/2\rfloor$, then $|\Delta_{l-1}(\cG)|-|\cG|\ge g(l,m)$ holds. This will be crucial in the proof of \lref{spernerstab}, the stability result on 2-Sperner families. When comparing the size of a 2-Sperner family $\cF$ to $\Sigma(n,2)$ we will split $\cF$ into an upper and lower Sperner family $\cF_u$ and $\cF_l$. Then $\cF_u'=\{F\in \cF_u: |F|>n/2\}$ will be replaced by $\Delta_{n/2}(\cF'_u)$ and $\cF_l'=\{F\in \cF_l: |F|>n/2-1\}$ will be replaced by $\Delta_{n/2-1}(\cF'_l)$. As the resulting family is 2-Sperner, we will have $|\cF|\le \Sigma(n,2)-g(n/2,|\cF'_u|)-g(n/2-1,|\cF'_l|)$. To be able to do calculations with expressions involving the $g$ function, we gather some properties of $x(l,m)$ and $g(l,m)$ in the following proposition.

\begin{prop}
\label{prop:change}
(a)  If $m \le \binom{2l}{l}$, then $x(l,m)\le x(l+1,m) \le x(l,m)+1$ holds.

(b) If $x(l,m) \le 2l-1$, then $g(l,m) \ge 0$ holds.

(c) If $m_1+m_2=m$ and $x(l,m)\le 2l-1$, then $g(l,m_1)+g(l,m_2)\ge g(l,m)$ holds.

(d) If $m \le \binom{2l-1}{l}$, then $g(l,m)\le g(l+1,m)$ holds.

(e) For every $\varepsilon>0$ there exists $l_0$ such that if $l\ge l_0$, then $g(l,m)$ is increasing in the interval $0\le m\le \binom{(2-\varepsilon)l}{l}$.

(f) If $x(l,m) \le 4l/3-1$, then $2m \le g(l,m)\le 2lm$ holds.

\end{prop}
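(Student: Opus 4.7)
The unifying tool is the identity $\binom{x}{l-1}/\binom{x}{l}=l/(x-l+1)$, which yields the closed form
\[
g(l,m) = m \cdot \frac{2l-x-1}{x-l+1}, \qquad x=x(l,m).
\]
Once this identity is recorded, the order of attack is (i), (ii), (vi), (iii), (iv), (v). Part (i) follows because $m\le\binom{2l}{l}$ forces $x(l,m)\le 2l$, so $\binom{x}{l+1}=\binom{x}{l}(x-l)/(l+1)\le m$, and strict monotonicity of $\binom{z}{l+1}$ in $z\ge l+1$ yields $x(l+1,m)\ge x(l,m)$. Part (ii) is immediate from the closed form, since the fractional factor is nonnegative precisely when $x\le 2l-1$. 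Part (vi) is the same computation solved in reverse: the inequality $g/m\ge 2$ reduces to a linear inequality in $x$ that follows from the hypothesis on $x(l,m)$.

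For (iii) I will write $g(l,m)=m\cdot h(m)$ with $h(m)=(2l-x(l,m)-1)/(x(l,m)-l+1)$. A one-line derivative calculation shows that the map $x\mapsto(2l-x-1)/(x-l+1)$ has derivative $-l/(x-l+1)^2<0$, so $h$ is strictly decreasing in $m$, and by (ii) it is nonnegative throughout the stated range. Therefore for any decomposition $m_1+m_2=m$ with $m_i\le m$,
\[
g(l,m_1)+g(l,m_2)=m_1h(m_1)+m_2h(m_2)\ge (m_1+m_2)h(m)=g(l,m).
\]

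For (iv) I will reformulate the claim as $\binom{x}{l-1}\le\binom{y}{l}$ where $\binom{x}{l}=m=\binom{y}{l+1}$. Expressing both sides via the ratio identity reduces this to $l(y-x)\le x+1$. To bound $y-x$ I will invoke the real-variable Pascal identity $\binom{x+1}{l+1}=\binom{x}{l}+\binom{x}{l+1}$ (verified directly from the falling-factorial definition); since $m\ge 1$ forces $x\ge l$ and hence $\binom{x}{l+1}\ge 0$, I obtain $\binom{y}{l+1}=\binom{x}{l}\le\binom{x+1}{l+1}$, whence $y\le x+1$ by monotonicity of $\binom{z}{l+1}$. The desired bound $l(y-x)\le l\le x+1$ then follows from $x\ge l$.

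Part (v) is the main obstacle, being the only piece that is analytic rather than algebraic. I will work with $\tilde g(x)=\binom{x}{l-1}-\binom{x}{l}$, which is strictly monotone in $x$ iff $g(l,m)$ is strictly monotone in $m$ since $x(l,\cdot)$ is strictly increasing. Differentiating via the logarithmic derivative $\binom{x}{k}'=\binom{x}{k}\sum_{i=0}^{k-1}1/(x-i)$, then factoring out $\binom{x}{l-1}/l$ and using $\binom{x}{l}=\binom{x}{l-1}(x-l+1)/l$ and $\sum_{i=0}^{l-1}1/(x-i)=\sum_{i=0}^{l-2}1/(x-i)+1/(x-l+1)$, the condition $\tilde g'(x)\ge 0$ simplifies to
\[
(2l-1-x)\sum_{i=0}^{l-2}\frac{1}{x-i}\ge 1.
\]
On the range $l\le x\le(2-\varepsilon)l$, the first factor is at least $\varepsilon l-1$ and the sum is at least $(l-1)/x\ge(l-1)/((2-\varepsilon)l)$, so the product is of order $\varepsilon l$ and exceeds $1$ once $l$ is larger than some $l_0$ depending only on $\varepsilon$. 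I expect the delicate step to be verifying this estimate uniformly over the whole range of $x$, since one endpoint pushes the linear factor $(2l-1-x)$ towards zero while the other pushes the harmonic-type sum towards its minimum; the product lower bound above handles both cases simultaneously.
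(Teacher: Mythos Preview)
Your argument is correct and matches the paper's for (i), (ii), (iii), and (vi), all of which flow directly from the closed form $g(l,m)=m\,(2l-x-1)/(x-l+1)$. For (v) both you and the paper differentiate $\tilde g(x)=\binom{x}{l-1}-\binom{x}{l}$ and show the derivative is positive on the relevant range; your logarithmic-derivative packaging, which reduces everything to the single inequality $(2l-1-x)\sum_{i=0}^{l-2}(x-i)^{-1}\ge 1$, is a bit tidier than the paper's product-rule expansion, but the content is the same. The only substantive difference is (iv): the paper forms the ratio $\binom{x}{l-1}/\binom{y}{l}$ and asserts it is below $1$ citing part (i) together with $x\le 2l-1$, whereas you instead establish the \emph{upper} bound $y\le x+1$ via the real-variable Pascal identity $\binom{x+1}{l+1}=\binom{x}{l}+\binom{x}{l+1}\ge m$ and then verify $l(y-x)\le l\le x+1$ directly. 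Your route is more self-contained---indeed, the inequality $x\le y$ from (i) by itself pushes the ratio the wrong way, so it is really a bound of Pascal type that does the work. One small remark: in (iv) and at the lower endpoint of (v) you tacitly assume $m\ge 1$ (hence $x\ge l$); the same estimates go through verbatim for $l-1\le x<l$, so this does not affect the stated range $0\le m\le \binom{(2-\varepsilon)l}{l}$.
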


\begin{proof} (a) Clearly the polynomial $\binom{x}{l+1}$ is monotone increasing in $x$ if $x\ge l+1$ holds. Observe that $\binom{x(l,m)}{l+1}=\frac{x(l,m)-l}{l+1}\binom{x(l,m)}{l}<m$ as $x(l,m)\le 2l$ by the assumption $m \le \binom{2l}{l}$. Therefore, $x(l,m)\le x(l+1,m)$ holds. Similarly, $\binom{x(l,m)+1}{l+1}=\frac{x(l,m)+1}{l+1}\binom{x(l,m)}{l}>m$ and therefore $x(l+1,m)\le x(l,m)+1$ holds.

To obtain (b), (c), and (f) write $g(l,m)$ in the following form
\[
g(l,m)=\binom{x}{l-1}-\binom{x}{l}=\left(\frac{l}{x-l+1}-1\right)\binom{x}{l}=\frac{2l-x-1}{x-l+1}m.
\]
(b) and (f) are straightforward and to obtain (c) note that as for fixed $l$ we know that $x(l,m)$ is an increasing function of $m$, the fraction $\frac{2l-x-1}{x-l+1}$ is decreasing in $m$.

To obtain (d), as $g(l,m)-g(l+1,m)=\binom{x(l,m)}{l-1}-\binom{x(l+1,m)}{l}$ we need to compare $\binom{x(l,m)}{l-1}$ and $\binom{x(l+1,m)}{l}$.
\[
\frac{\binom{x(l,m)}{l-1}}{\binom{x(l+1,m)}{l}}=\frac{\binom{x(l,m)}{l-1}}{m}\frac{m}{\binom{x(l+1,m)}{l}}=\frac{\binom{x(l,m)}{l-1}}{\binom{x(l,m)}{l}}\frac{\binom{x(l+1,m)}{l+1}}{\binom{x(l+1,m)}{l}}=\frac{l}{x(l,m)-l+1}\cdot\frac{x(l+1,m)-l}{l+1}<1,
\]
where we used $x(l+1,m) \le x(l,m)+1$ of (a).

To obtain (e) consider $g(l,m)$ in the following form
\[
g(l,m)=\binom{x}{l-1}-\binom{x}{l}=\frac{x(x-1)\dots (x-l+2)(2l-x-1)}{l!}.
\]
As a function of $x$ it is a polynomial with no multiple roots, therefore between $l-2$ and $2l-1$ it is a concave function with one maximum. Its derivative is 
\[
\frac{1}{l!}\left((2l-x-1)\sum_{i=0}^{l-2}\prod_{j=0,j\ne i}^{l-2}(x-j) -\prod_{i=0}^{l-2}(x-i)\right).
\]
If $x \le (2-\varepsilon)l$, then any product in the sum is at least an $\varepsilon$ fraction of the product to subtract. Thus if $l$ is large enough the derivative is positive and thus the function is increasing. As $x$ is a monotone increasing function of $m$, the claim holds.
\end{proof}

After all these preliminary results we are ready to state and prove the stability result on 2-Sperner families. Let us remind the reader that we would like to obtain a lemma that states that if a 2-Sperner family $\cF$ is very different from the largest one(s) (i.e. that/those in $\Sigma^*(n,2)$), then it should be much smaller than the extremal size. The parameter with the which we measure this difference is the number of sets in $\cF$ that do not belong to the closest extremal family, i.e. $\min|\cF\setminus \cF^*|$ where the minimum is taken over the families in $\Sigma^*(n,2)$.

\begin{lemma}
\label{lem:spernerstab}
For every $\varepsilon > 0$, there exists an $n_0$ such that the following holds: if $n\ge n_0$, $m\le \binom{(1-\varepsilon)n}{\lfloor n/2\rfloor}$ and $\cF \subseteq 2^{[n]}$ is a $2$-Sperner family with the property that for any $\cF^*\in \Sigma^*(n,2)$ we have $|\cF \setminus\cF^*| \ge m$, then the following upper bound holds on the size of $\cF$: 
$$|\cF|\le \Sigma(n,2)-g(\lceil n/2\rceil +1,m).$$
\end{lemma}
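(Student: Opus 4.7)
\textit{Proof plan.}
Since $\cF$ is $2$-Sperner, no $3$-chain is permitted, so every element of $\cF$ is either maximal or minimal in $\cF$; hence $\cF=T\cup B$, with the antichains $T$ of maximal elements and $B$ of minimal elements each being Sperner. Set $L:=\lceil n/2\rceil$; by the complementation symmetry (which permutes $\Sigma^*(n,2)$), I may assume in both parities that the family $\binom{[n]}{L-1}\cup\binom{[n]}{L}$ is the relevant reference in $\Sigma^*(n,2)$, so that with
\[
\cU:=\cF\cap\binom{[n]}{\ge L+1},\qquad \cD:=\cF\cap\binom{[n]}{\le L-2},\qquad \overline{\cM_j}:=\binom{[n]}{j}\setminus\cF,
\]
the hypothesis reads $|\cU|+|\cD|\ge m$. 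The identity $|\cF|=\Sigma(n,2)+|\cU|+|\cD|-|\overline{\cM_{L-1}}|-|\overline{\cM_L}|$ then reduces the lemma to the lower bound
\[
|\overline{\cM_{L-1}}|+|\overline{\cM_L}|\;\ge\;|\cU|+|\cD|+g\!\left(L+1,\,|\cU|+|\cD|\right).
\]

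The bulk of the proof is a two-step shadow cascade on $\cU$ paired with a symmetric shade cascade on $\cD$. Since $\cU\subseteq T$ is Sperner in $\binom{[n]}{\ge L+1}$, \lref{genshadow}(i) gives $|\Delta_L(\cU)|\ge |\cU|+g(L+1,|\cU|)$. Each $B\in\Delta_L(\cU)$ either sits in $\overline{\cM_L}$, or it lies in $\cF$, in which case the chain $B\subsetneq F\in\cU$ forces $B$ to be minimal in $\cF$ by $2$-Spernerness, so $\binom{B}{L-1}\subseteq\overline{\cM_{L-1}}$. Writing $S^+:=\Delta_L(\cU)\cap\cF$ and applying \lref{genshadow}(i) once more to $S^+$ (with \pref{change}(ii) to keep the residual nonnegative), the upward cascade contributes $|\cU|+g(L+1,|\cU|)$ to $|\overline{\cM_{L-1}}|+|\overline{\cM_L}|$. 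The mirror argument on $\cD$, with $R^+:=\nabla_{L-1}(\cD)\cap\cF$, analogously contributes $|\cD|+g(L+1,|\cD|)$ (using \pref{change}(iv) to absorb the shift from $g(L+2,\cdot)$ to $g(L+1,\cdot)$ in the even case).

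The crucial observation for the mixed case is that these two contributions are \emph{disjoint} as subsets of $\overline{\cM_{L-1}}\cup\overline{\cM_L}$: a putative common element $C\in\Delta_L(\cU)\cap\nabla_L(R^+)$ at level $L$ would yield a chain $G\subsetneq C\subsetneq F$ with $G\in R^+\subseteq\cF$ and $F\in\cU\subseteq\cF$, and the $A\in\cD$ witnessing $G\in\nabla_{L-1}(\cD)$ then completes a $3$-chain $A\subsetneq G\subsetneq F$ in $\cF$, contradicting $2$-Spernerness. The analogous argument rules out overlap at level $L-1$. Adding the two cascade bounds therefore produces
\[
|\overline{\cM_{L-1}}|+|\overline{\cM_L}|\;\ge\;|\cU|+|\cD|+g(L+1,|\cU|)+g(L+1,|\cD|),
\]
and the subadditivity in \pref{change}(iii) upgrades this to the desired $g(L+1,|\cU|+|\cD|)$ term. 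The main point of difficulty I anticipate is precisely this no-overlap step, since naively the shadow of $\cU$ and the shade of $\cD$ both target the same two middle levels; the hypothesis $\log m=o(n)$ is what keeps $x(L+1,m)$ safely below $2L+1$, placing the argument firmly inside the range where \pref{change}(ii)--(v) applies and converts the qualitative disjointness argument into the sharp quantitative bound.
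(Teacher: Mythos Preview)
Your cascade idea is essentially the approach the paper takes in its Case~II, and the disjointness step is correct and nicely argued, but there are two genuine gaps.

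First, the assertion $\cU\subseteq T$ is false in general. A set $F\in\cF$ with $|F|\ge L+1$ can perfectly well have a strict superset $F'\in\cF$ (then $F$ is minimal, not maximal), and $F,F'\in\cU$ form a $2$-chain. Thus $\cU$ need not be Sperner and \lref{genshadow}(i) cannot be applied to it as stated. The paper avoids this by decomposing $\cF=\cF_1\cup\cF_2$ into the antichain $\cF_1$ of minimal elements and the antichain $\cF_2=\cF\setminus\cF_1$, bounding $|\cF_1|$ and $|\cF_2|$ separately against $\binom{n}{L-1}$ and $\binom{n}{L}$ via \lref{genshadow}, and then merging the four resulting $g$-terms through \pref{change}(iii),(iv). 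Your observation that each $B\in S^+$ is forced to be minimal is correct and corresponds to part of that merge, but the initial shadow bound on $\cU$ itself is not justified.

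Second, and more seriously, your reduction replaces the target $g(L+1,m)$ by $g(L+1,|\cU|+|\cD|)$. Passing from one to the other requires $g(L+1,\cdot)$ to be nondecreasing on $[m,\,|\cU|+|\cD|]$, but \pref{change}(v) only guarantees this up to $\binom{(2-\varepsilon)(L+1)}{L+1}$, and nothing in the hypotheses bounds $|\cU|+|\cD|$: it is $m$, not $|\cU|+|\cD|$, that satisfies $\log m=o(n)$. When $|\cU|+|\cD|$ is of order $\binom{n}{L}$ (which the hypothesis $|\cU|+|\cD|\ge m$ certainly allows), $g(L+1,|\cU|+|\cD|)$ can fall below $g(L+1,m)$---indeed $g(L+1,\cdot)$ vanishes at $\binom{2L+1}{L+1}$---and your chain of inequalities breaks. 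The paper handles this large-deviation regime by an entirely separate LYM argument (\cref{weakstab}), splitting into Case~I (where $m'=\min_{\cF^*}|\cF\setminus\cF^*|\ge\binom{(1-\varepsilon/2)n}{n/2}$) and Case~II ($m'$ small). Your plan corresponds only to Case~II and needs an analogue of Case~I to be complete.
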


\begin{proof}
Let $\varepsilon>0$ be fixed and let $\cF \subseteq 2^{[n]}$ be a $2$-Sperner family such that for any $\cF^*\in \Sigma^*(n,2)$ we have $|\cF \setminus\cF^*| \ge m$. Note that by \pref{change} (f) we have $g(\lceil n/2\rceil +1,m) \le 2nm \le \binom{(1-0.95\varepsilon)n}{n/2}$ if $n$ is large enough. Write $m'=\min_{\cF^*\in \Sigma^*(n,2)}\{|\cF \setminus\cF^*|\}$. We can assume that $m'\le (\frac{1}{2}+o(1))\binom{n}{\lceil n/2\rceil+1}$ as otherwise $|(\binom{[n]}{\lfloor n/2\rfloor}\cup \binom{[n]}{\lceil n/2\rceil}) \setminus \cF|\ge \delta\binom{n}{\lfloor n/2\rfloor}$ would hold for some positive $\delta$ and we would be done by \cref{weakstab} part (a) or (c) depending on the parity of $n$.

\vskip 0.3truecm

\textsc{Case I.} $m' \ge \binom{(1-\varepsilon/2)n}{n/2}$.

\vskip 0.1truecm

If $n$ is odd, then by \cref{weakstab} (b), we have $|\cF|\le \Sigma(n,2)-\frac{1.9m'}{n}$ and we are done as $\frac{1.9m'}{n} \ge \frac{1.9}{n}\binom{(1-\varepsilon/2)n}{n/2} \ge \binom{(1-3\varepsilon/4)n}{n/2}$ for $n$ large enough.

If $n$ is even, then by symmetry we can suppose that $m'=|\cF\setminus (\binom{[n]}{n/2-1}\cup\binom{[n]}{n/2})|$.  Let $\cF=\cF_1 \cup \cF_2$ with $\cF_1=\{F \in \cF: \not\exists F'\in \cF, F' \subset F\}$ and $\cF_2=\cF\setminus \cF_1$. Let us write 
\[
\cF_{1,+}=\{F \in\cF_1: |F|>n/2\}, \cF_{2,+}=\{F \in\cF_2: |F|>n/2\}, \cF_-=\{F \in \cF: |F| <n/2-1\},
\]
\[
\cG_{n/2}=\{G \notin \cF, |G|=n/2\}, \hskip 0.3truecm \cG_{n/2-1}=\{G \notin \cF, |G|=n/2-1\}.
\]
Observe the following bounds:
\begin{itemize}
\item
$|\cG_{n/2}|\le \binom{(1-3\varepsilon/4)n}{n/2}$ as otherwise by \cref{weakstab} (d), we are done.
\item
$|\cF_{1,+}|\le \binom{(1-3\varepsilon/4)n}{n/2}$ as $\Delta_{n/2}(\cF_{1,+})\subseteq \cG_{n/2}$ and $|\cF_{1,+}|\le |\Delta_{n/2}(\cF_{1,+})|$ hold.
\item
$|\cF_-|\le \binom{(1-3\varepsilon/4)n}{n/2}$ as otherwise by \cref{weakstab} (c), we are done.
\item
By definition all sets in $\Delta_{n/2}(\cF_{2,+})\setminus \cG_{n/2}$ must belong to $\cF_1$. No set below an arbitrary set of $\cF_1$ belongs to $\cF$, therefore all sets of $\Delta_{n/2-1}(\cF_{2,+})$ belong to $\cG_{n/2-1}$ except those whose complete shade belongs to $\cG_{n/2}$. By double counting pairs $(G,G')$ with $G' \subset G$, $|G'|=n/2-1$, $G\in \cG_{n/2}$ and $\nabla_{n/2}(G') \subseteq \cG_{n/2}$ we obtain that the number of such exceptional sets is $(1+o(1))|\cG_{n/2}|\le \binom{(1-2\varepsilon/3)n}{n/2}$. Let $\cE$ denote the family of these exceptional sets.
\end{itemize}
Let $m''=|\cF_{2,+}|$. By the above, we have $m'\ge m''=m'-|\cF_-|-|\cF_{1,+}| \ge \binom{(1-0.6\varepsilon)n}{n/2}$. Also, writing $m''=\binom{x''}{n/2+1}$ we have
\begin{align*}
|\cF| & \le  \Sigma(n,2)-|\Delta_{n/2-1}(\cF_{2,+})|+|\cE|+|\cF_{2,+}|+|\cF_{1,+}|+|\cF_-| \\
& \le  \Sigma(n,2)-|\Delta_{n/2-1}(\cF_{2,+})|+m''+3\binom{(1-2\varepsilon/3)n}{n/2} \\
& \le  \Sigma(n,2)-\left(\binom{x''}{n/2-1}-\binom{x''}{n/2+1}\right)+3\binom{(1-2\varepsilon/3)n}{n/2} \\
& \le  \Sigma(n,2)-\frac{1}{n^2}m''+3\binom{(1-2\varepsilon/3)n}{n/2}.
\end{align*}
Here the third inequality follows by \lref{genshadow} and the last one follows from $x''\le 2n-1+o(1)$ since $m''\le m' \le (\frac{1}{2}+o(1))\binom{n}{n/2}$.
We are done as $m''\ge \binom{(1-0.6\varepsilon)n}{n/2}$ holds.
\vskip 0.5truecm

\textsc{Case II.} $m' < \binom{(1-\varepsilon/2)n}{n/2}$

\vskip 0.1truecm

Again we may assume that $|(\binom{[n]}{n/2-1}\cup \binom{[n]}{n/2}) \setminus \cF|=m'$. Let $\cF=\cF_1 \cup \cF_2$ with $\cF_1=\{F \in \cF: \not\exists F'\in \cF, F' \subset F\}$ and $\cF_2=\cF\setminus \cF_1$. Let us write 
$$\cF_{1,-}=\{F \in \cF_1: |F|<n/2-1\},\quad \cF_{1,+}=\{F \in \cF_1: |F|>n/2-1\},$$ 
$$\cF_{2,-}=\{F \in \cF_1: |F|<n/2\},\quad \cF_{2,+}=\{F \in \cF_1: |F|>n/2\}.$$ 

To bound the size of $\cF_1$ note that $\cF_1$ is disjoint both from $\Delta_{n/2-1}(\cF_{1,+})$ and $\nabla_{n/2-1}(\cF_{1,-})$. Similarly, $\cF_2$ is disjoint both from $\Delta_{n/2}(\cF_{2,+})$ and $\nabla_{n/2}(\cF_{2,-})$. By \lref{genshadow} we obtain
\begin{align*}
\label{sumi}
|\cF|=|\cF_1|+|\cF_2| & \le  \binom{n}{n/2-1}-g(n/2+2,|\cF_{1,-}|)-g(n/2,|\cF_{1,+}|)+ \\
& +  \binom{n}{n/2}-g(n/2+1,|\cF_{2,-}|)-g(n/2+1,|\cF_{2,+}|)\\
& \le  \Sigma(n,2)-g(n/2+1, |\cF_{2,-}|+|\cF_{2,+}|+|\cF_{1,-}|)-g(n/2,|\cF_{1,+}|),
\end{align*}
where we used \pref{change} (c) and (d). Let us partition $\cF_{1,+}$ into $\cF_{1,+,n/2} \cup \cF_{1,+,+}$ with $\cF_{1,+,n/2}=\{F \in \cF_{1,+}: |F|=n/2\}$. As $\cF_{1,+}$ is Sperner, $\cF_{1,+,n/2}$ and $\Delta_{n/2}(\cF_{1,+,+})$ are disjoint and thus $|\cF_{1,+,n/2} \cup \Delta_{n/2}(\cF_{1,+,+})| \ge |\cF_{1,+}|+g(n/2+1,|\cF_{1,+,+}|)$. Also $s=|\cF_{1,+,n/2} \cup \Delta_{n/2}(\cF_{1,+,+})|\le \binom{n-1}{n/2}$ and thus $g(s)\ge 0$ holds. Therefore, we obtain $|\cF_1| \le \binom{n}{n/2-1}-g(n/2+1,|\cF_{1,+,+}|)-g(n/2+2,|\cF_{1,-}|)$. By \pref{change} (c), this strengthens the above arrayed inequality to
\[
|\cF| \le \Sigma(n,2)-g(n/2+1,|\cF_{1,+,+}| + |\cF_{1,-}\cup \cF_{2,-}|+ |\cF_{2,+}|).
\]
Note that $m\le |\cF_{1,+,+} \cup \cF_{1,-}\cup \cF_{2,-} \cup \cF_{2,+}|$ as $\cF_{1,+,n/2} \subseteq \binom{n}{n/2}$. Therefore, we are done by \pref{change} (e).
\end{proof}

Having proved \lref{spernerstab} we can now turn our attention to butterfly-free families containing chains of length 3. Our main tool to bound their size is the following LYM-type inequality.

\begin{lemma}
\label{lem:imprlym}
Let $\cF \subset 2^{[n]} \setminus \{\emptyset, [n]\}$ be a butterfly-free family and let $\cM$ be defined as $\{M \in \cF: \exists F,F' \in \cF \hskip 0.3truecm F \subset M \subset F'\}$.
\[
\sum_{F \in \cF }\frac{1}{\binom{n}{|F|}}+\sum_{M \in \cM}\left(1-\frac{n}{|M|(n-|M|)}\right)\frac{1}{\binom{n}{|M|}} \leq 2.
\]
\end{lemma}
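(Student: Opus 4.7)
The plan is to prove the inequality via a probabilistic chain-counting argument: sample a uniformly random maximal chain $\cC$ in $2^{[n]}$, introduce a signed random variable $\Psi(\cC)$ whose expectation lower-bounds the left-hand side of the lemma, and establish $\Psi(\cC)\le 2$ pointwise.

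The first step is an observation forced by butterfly-freeness: for each $M\in\cM$ there is a \emph{unique} $F_M\in\cF$ with $F_M\subsetneq M$ and a unique $F'_M\in\cF$ with $F'_M\supsetneq M$. Indeed, if two distinct $F_1,F_2\in\cF$ were both strictly below $M$, then together with $M$ and any $F'\in\cF$ strictly above $M$ (which exists since $M\in\cM$) the four sets would form a butterfly; the same observation also rules out any $4$-chain in $\cF$. With $F_M,F'_M$ in hand, for each $M\in\cM$ I define the signed indicator
\[
\psi_M(\cC) = \mathbf{1}[M\in\cC,\,F_M\notin\cC,\,F'_M\notin\cC] - \mathbf{1}[M\in\cC,\,F_M\in\cC,\,F'_M\in\cC],
\]
and set $\Psi(\cC) = |\cC\cap\cF| + \sum_{M\in\cM}\psi_M(\cC)$.

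The core pointwise claim $\Psi(\cC)\le 2$ is proved by case analysis on $k := |\cC\cap\cF|\in\{0,1,2,3\}$. If $k=1$ with the unique set $F\in\cM$, then $F_F,F'_F\notin\cC$ (they lie in $\cF$ but differ from $F$), so $\psi_F=1$ and $\Psi=2$. If $k=2$, at most one of the two sets can lie in $\cM$ (two would force a $4$-chain), and that one has its unique partner on the chain, so $\psi=0$ and $\Psi=2$. The critical case is $k=3$: writing $\cC\cap\cF=\{F_1\subsetneq F_2\subsetneq F_3\}$, uniqueness forces $F_2\in\cM$ with $F_{F_2}=F_1$, $F'_{F_2}=F_3$, and also forces $F_1,F_3\notin\cM$ (either would require a second set above or below), so the only nonzero $\psi_M(\cC)$ is $\psi_{F_2}(\cC)=-1$, giving $\Psi=2$. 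The remaining cases $k=0$ and $k=1$ with $F\notin\cM$ are immediate.

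For the expectation, conditioning on $M\in\cC$ decomposes the chain into independent uniformly random maximal chains in $2^M$ and in $2^{[n]\setminus M}$, so the events $\{F_M\in\cC\}$ and $\{F'_M\in\cC\}$ are conditionally independent with $P(F_M\in\cC\mid M\in\cC)=\binom{|M|}{|F_M|}^{-1}\le 1/|M|$ and analogously $P(F'_M\in\cC\mid M\in\cC)\le 1/(n-|M|)$. A brief inclusion-exclusion then yields
\[
\mathbb{E}[\psi_M(\cC)] = \frac{1 - P(F_M\mid M) - P(F'_M\mid M)}{\binom{n}{|M|}} \ge \frac{(|M|-1)(n-|M|-1)-1}{|M|(n-|M|)\binom{n}{|M|}},
\]
and summing over $M$ and adding $\mathbb{E}[|\cC\cap\cF|]=\sum_{F\in\cF}1/\binom{n}{|F|}$ reproduces the left-hand side of the lemma after regrouping the $F\in\cM$ and $F\notin\cM$ contributions; the inequality then follows from $\mathbb{E}[\Psi(\cC)]\le 2$. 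The main obstacle I anticipate is the exact cancellation in the case $k=3$, which requires butterfly-freeness to simultaneously guarantee the uniqueness of $F_M,F'_M$ and that $F_1,F_3\notin\cM$, so that the $-1$ contribution of $\psi_{F_2}$ precisely offsets the third element on the chain; the probabilistic ingredients are routine.
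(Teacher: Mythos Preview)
Your proof is correct and is essentially a probabilistic rephrasing of the paper's counting argument: the paper counts pairs $(F,\cC)$ with $F\in\cF\cap\cC$, bounds each chain's contribution by $2$ plus a correction of $+1$ for chains with $|\cF\cap\cC|=3$ and $-1$ for chains containing some $M\in\cM$ but neither $F_M$ nor $F'_M$, and then estimates these correction terms exactly as you do via the uniqueness of $F_M,F'_M$ and the bounds $p\le 1/|M|$, $q\le 1/(n-|M|)$; your pointwise inequality $\Psi(\cC)\le 2$ is precisely this chain-by-chain balance, and your case analysis on $k=|\cC\cap\cF|$ matches the paper's implicit case split.
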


\begin{proof}
We count the pairs $(F,\cC)$ where $\cC$ is a maximal chain in $[n]$ and $F \in \cF\cap \cC$ holds. For fixed $F$ there are $|F|!(n-|F|)!$ maximal chains containing $F$. For any maximal chain $\cC$, we have $|\cF \cap \cC|\le 3$ as a 4-chain is a butterfly. If $|\cC\cap \cF|=3$, then $\cC$ contains exactly one member $M \in \cM$ as otherwise $\cF$ would contain a 4-chain. Note that for any $M \in \cM$ there exist unique sets $F_{1,M},F_{2,M}\in \cF$ with $F_{1,M} \subset M \subset F_{2,M}$. Indeed, sets with these containment properties exist by definition of $\cM$ and $M$ cannot contain two sets $F',F''\in \cF$ as $F_{2,M},M, F',F''$ would consititute a butterfly. Similarly if $M \subset F^*,F^{**}\in \cF$ holds, then $F_{1,M},M,F^*,F^{**}$ would consititute a butterfly.
Therefore, all maximal chains $\cC$ that contain $M$ with $|\cF \cap \cC|=3$ must contain $F_{1,M}$ and $F_{2,M}$ and thus their number is at most $(|M|-1)!(n-|M|-1)!$. (Here we used that $\emptyset, [n] \notin \cF$.) Moreover, for any maximal chain $\cC$ with $M \in \cC, F_{1,M},F_{2,M} \notin \cC$, we have $|\cC \cap \cF|=1$ and the number of such chains is at least $(|M|!-(|M|-1)!)((n-|M|)!-(n-|M|-1)!)$. We obtained the following inequality
\begin{align*}
\sum_{F \in \cF} |F|!(n-|F|)! & \le 2n!+\sum_{M\in \cM}(|M|-1)!(n-|M|-1)! - \\
& - \sum_{M \in \cM}((|M|-1)(|M|-1)!)((n-|M|-1)(n-|M|-1)!).
\end{align*}
Rearranging and dividing by $n!$ we obtain the claim of the lemma. 
\end{proof}

\begin{corollary}
\label{cor:butt}  Let $\cF \subseteq 2^{[n]}$ be a butterfly-free family with $\emptyset, [n] \notin\cF$ and let us write $\cM=\{M \in \cF: \exists F,F' \in \cF \hskip 0.3truecm F \subset M \subset F'\}$. If $n$ is large enough, then $|\cF| \le \Sigma(n,2)-9|\cM|/20$.
\end{corollary}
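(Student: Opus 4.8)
The plan is to deduce Corollary~\ref{cor:butt} directly from the LYM-type inequality of Lemma~\ref{lem:imprlym} by bounding each term from below. First I would observe that for every $F \in \cF \setminus \cM$ we trivially have $\frac{1}{\binom{n}{|F|}} \ge \frac{1}{\binom{n}{\lfloor n/2\rfloor}}$, since $\binom{n}{|F|} \le \binom{n}{\lfloor n/2\rfloor}$ for all $F$. The crux is then to show that for every $M \in \cM$ the coefficient in the second sum satisfies
\[
\left(1+\frac{(|M|-1)(n-|M|-1)-1}{|M|(n-|M|)}\right)\frac{1}{\binom{n}{|M|}} \ge \frac{4}{3}\cdot\frac{1}{\binom{n}{\lfloor n/2\rfloor}}.
\]
Granting both estimates, Lemma~\ref{lem:imprlym} yields
\[
\frac{|\cF \setminus \cM| + \tfrac{4}{3}|\cM|}{\binom{n}{\lfloor n/2\rfloor}} \le 2,
\]
i.e. $|\cF| + \tfrac{1}{3}|\cM| = |\cF\setminus\cM| + \tfrac{4}{3}|\cM| \le 2\binom{n}{\lfloor n/2\rfloor}$. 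Since $\Sigma(n,2) = \binom{n}{\lfloor n/2\rfloor} + \binom{n}{\lfloor n/2\rfloor+1}$, and $2\binom{n}{\lfloor n/2\rfloor} \ge \Sigma(n,2)$ only when $n$ is odd, I would instead want the sharper form of the second estimate relative to the \emph{middle two} binomial coefficients, replacing the bound $\frac{4}{3}\frac{1}{\binom{n}{\lfloor n/2\rfloor}}$ by something that, summed appropriately against the first term, gives exactly $\Sigma(n,2) - |\cM|/3$; concretely one wants $|\cF\setminus\cM|\cdot w_0 + |\cM|\cdot w_1 \le 2$ with $w_0,w_1$ chosen so that $|\cF| \le \Sigma(n,2) - |\cM|/3$ follows, using that the weights attached to the two central levels are $\binom{n}{\lfloor n/2\rfloor}^{-1}$.

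The main obstacle, and the step that needs genuine care, is verifying the coefficient inequality for members of $\cM$. Writing $k = |M|$ and $a_k = \binom{n}{k}$, the claim is that $\left(1 + \frac{(k-1)(n-k-1)-1}{k(n-k)}\right) a_k^{-1}$ is, as a function of $k$ over the relevant range $1 < k < n-1$, minimized near $k = n/2$, and that even there the extra factor $1 + \frac{(k-1)(n-k-1)-1}{k(n-k)}$ exceeds $4/3$ by enough to compensate. Note $\frac{(k-1)(n-k-1)-1}{k(n-k)} = 1 - \frac{n-1}{k(n-k)} - \frac{1}{k(n-k)} = 1 - \frac{n}{k(n-k)}$, so the coefficient is $\left(2 - \frac{n}{k(n-k)}\right)a_k^{-1}$, which at $k \approx n/2$ is about $(2 - \tfrac{4}{n})\binom{n}{n/2}^{-1}$, i.e.\ essentially $2/\binom{n}{\lfloor n/2\rfloor}$ rather than merely $4/3$ of it. For $k$ away from $n/2$ the binomial coefficient $a_k$ shrinks, so $a_k^{-1}$ grows and the inequality only gets easier; the one spot requiring a short check is $k$ within a bounded distance of $n/2$, where $a_k$ is close to $a_{\lfloor n/2\rfloor}$ but the factor $2 - \frac{n}{k(n-k)}$ is close to $2$, and one confirms the product still beats $\frac{4}{3}a_{\lfloor n/2\rfloor}^{-1}$ (indeed it beats $\tfrac{3}{2}a_{\lfloor n/2\rfloor}^{-1}$) for all $n \ge 2$.

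Assembling the pieces: from Lemma~\ref{lem:imprlym} and the two pointwise bounds,
\[
\frac{|\cF \setminus \cM|}{\binom{n}{\lfloor n/2\rfloor}} + \frac{\tfrac{4}{3}|\cM|}{\binom{n}{\lfloor n/2\rfloor}} \le \sum_{F \in \cF \setminus \cM}\frac{1}{\binom{n}{|F|}} + \sum_{M\in\cM}\left(2 - \frac{n}{|M|(n-|M|)}\right)\frac{1}{\binom{n}{|M|}} \le 2.
\]
To get the stated $\Sigma(n,2) - |\cM|/3$ rather than $2\binom{n}{\lfloor n/2\rfloor} - |\cM|/3$, I would split the $\cF\setminus\cM$ sum according to whether $|F| \in \{\lfloor n/2\rfloor, \lceil n/2\rceil\}$ or not, giving those sets outside the middle two levels a strictly smaller weight $\binom{n}{\lfloor n/2\rfloor - 1}^{-1} < \binom{n}{\lfloor n/2\rfloor}^{-1}$; a standard counting argument (each middle level has at most $\binom{n}{\lfloor n/2\rfloor}$ sets, and the two middle levels together contribute their full LYM weight only if completely present) then converts the inequality into $|\cF\setminus\cM| + \tfrac43|\cM| \le \Sigma(n,2)$ after accounting correctly for the two distinct central binomial coefficients, whence $|\cF| \le \Sigma(n,2) - |\cM|/3$. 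The bookkeeping with the two middle levels when $n$ is even is the only delicate point, and it is handled exactly as in the proof of Theorem~\ref{thm:butt} (or by the argument behind Corollary~\ref{cor:weakstab}).
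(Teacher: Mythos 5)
Your approach is the same as the paper's: apply Lemma~\ref{lem:imprlym} and lower-bound the coefficient of each $M \in \cM$, which requires only that $2 \le |M| \le n-2$ (guaranteed by $\emptyset, [n] \notin \cF$). The paper's entire proof is the one-line observation that the ratio $\frac{(|M|-1)(n-|M|-1)-1}{|M|(n-|M|)}$ is at least $1/3$ for $n \ge 8$, i.e.\ the coefficient is at least $\frac{4}{3}\binom{n}{|M|}^{-1}$, and then stops. You go further and correctly observe the crux that the paper glosses over: the naive conversion gives $|\cF| + \tfrac13|\cM| \le 2\binom{n}{\lfloor n/2\rfloor}$, which is weaker than $\Sigma(n,2)$ for even $n$. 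Being alert to this parity issue is a genuine improvement in rigor over the paper's own writeup.

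However, two points in your resolution need fixing. First, a sign slip: $\binom{n}{\lfloor n/2\rfloor - 1}^{-1} > \binom{n}{\lfloor n/2\rfloor}^{-1}$, not $<$; sets outside the two central levels carry \emph{larger} LYM weight, which is what makes the argument work (they are more expensive to include), but your sentence states the reverse. Second, and more substantively, the intermediate claim $|\cF\setminus\cM| + \tfrac43|\cM| \le \Sigma(n,2)$ does \emph{not} follow from the $4/3$ coefficient bound alone. If one runs the obvious linear-programming argument (place the $|\cM|$ sets at level $\lfloor n/2\rfloor$, fill the remaining slots at the two cheapest levels), the factor $4/3$ yields only $|\cF| \le \Sigma(n,2) - \tfrac{|\cM|}{3}\cdot\frac{\binom{n}{\lfloor n/2\rfloor+1}}{\binom{n}{\lfloor n/2\rfloor}}$, which for even $n$ is $\Sigma(n,2) - \tfrac{|\cM|}{3}\cdot\frac{n}{n+2}$, strictly weaker than the claim. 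To close the gap one must use your sharper observation that the coefficient is in fact $\bigl(2 - \tfrac{n}{|M|(n-|M|)}\bigr)\binom{n}{|M|}^{-1}$, which near the middle levels is close to $2\binom{n}{\lfloor n/2\rfloor}^{-1}$ rather than $\tfrac43\binom{n}{\lfloor n/2\rfloor}^{-1}$; running the same LP with a factor of at least $3/2$ at the central level gives $|\cF| \le \Sigma(n,2) - \tfrac{|\cM|}{2}\cdot\frac{n}{n+2}$, which beats $\Sigma(n,2)-|\cM|/3$ for all $n\ge 4$. So the ingredients you list are sufficient, but the displayed intermediate inequality with coefficient $\tfrac43$ should be dropped and the bookkeeping carried out with the sharper coefficient; ``as in Theorem~\ref{thm:butt}'' is not a usable reference here since that theorem's proof does not appear in this paper.
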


\begin{proof}
As $\emptyset, [n] \notin\cF$, for any $M \in \cM$ we have $2 \le |M|\le n-2$ and thus $1-\frac{n}{|M|(n-|M|)}\ge 9/20$ for every $M \in\cM$ if $n$ is large enough. Therefore, the two summands in \lref{imprlym} corresponding to a set $M \in \cM$ is at least $29/20$ as much as the summand corresponding to a set $F \in \cF\setminus \cM$ with $|F|=|M|$. The number of possible summands in \lref{imprlym} is $\Sigma(n,2)$ if $\cM=\emptyset$ and each pair of sets in $\cM$ leaves place for one less summand.
\end{proof}

Now we are ready to prove \tref{butterflystab} the statement of which we recall here below.

\begin{theorem:repeat}
Let $m$ be a non-negative integer with $m\le \binom{\frac{2n}{3}-1}{\lceil n/2\rceil}$ and let $\cF \subseteq 2^{[n]}$ be a butterfly-free family such that $|\cF \setminus \cF^*|\ge m$ for every $\cF^*\in \Sigma^*(n,2)$. Then the inequality $|\cF| \le \Sigma(n,2)-\frac{m}{4}$ holds if $n$ is large enough.
\end{theorem:repeat}
\begin{proof}
Let $\cF \subseteq 2^{[n]}$ be a butterfly-free family satisfying the conditions of the theorem. If $\emptyset \in \cF$ or $[n] \in \cF$, then $\cF\setminus \{\emptyset\}$ or $\cF\setminus \{[n]\}$ does not contain the poset $\vee$ or $\wedge$, where $\vee$ is the poset with three elements one smaller than the other two and $\wedge$ is the poset with three elements one larger than the other two. In either case by a theorem of Katona and Tarj\'an \cite{KT}, we have $|\cF| \le (1+O(\frac{1}{n}))\binom{n}{n/2}$. Thus we may assume $\emptyset, [n] \notin \cF$. If $\cM=\{M \in \cF: \exists F,F' \in \cF \hskip 0.3truecm F \subset M \subset F'\}$ contains at least $10m/19$ sets, then we are done by \cref{butt}.  If $|\cM| \le 10m/19$, then $\cF\setminus \cM$ is 2-Sperner and $|(\cF\setminus \cM) \setminus \cF^*|\ge 9m/19$ for every $\cF^*\in \Sigma^*(n,2)$ and thus by \lref{spernerstab} we obtain $|\cF\setminus \cM| \le \Sigma(n,2) -g(\lceil n/2\rceil +1,9m/19)\le  \Sigma(n,2)-18m/19$ as we can use \pref{change} (f) by the assumption on $m$. Therefore, $|\cF|\le \Sigma(n,2)-8m/19$ holds.
\end{proof}

\section{Proof of the main result}

In this section we prove \tref{main}. Our main tool is the stability result \tref{butterflystab} proved in the previous section. This tells us that if the structure of a butterfly-free family is very different from that of the extremal family, then it contains much fewer sets. Since any new set yields an additional copy of the butterfly poset, a family with few butterflies must contain an almost extremal butterfly-free family. To deal with families $\cF$ containing almost extremal butterfly-free families $\cG$, we have to prove that most sets in $\cF \setminus \cG$ behave very similarly to the extra sets in the conjectured extremal families. We formalize this handwaving statement in the following theorem.

\begin{theorem}
\label{thm:wrongsets} For any $\varepsilon >0$ there exists an $n_0$ such that for any $n \ge n_0$ the following holds provided $m$ satisfies $\log m=o(n)$ and $n/2- \sqrt{n}\le k \le n/2+ \sqrt{n}$: let $\cF \subset \binom{[n]}{k}$ with $|\cF|=\binom{n}{k}-m$. Then the number of sets in $\binom{[n]}{\ge k+1}$ that contain fewer than $(1-\varepsilon)k$ sets from $\cF$ is $o(m)$.
\end{theorem}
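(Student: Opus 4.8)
The plan is to translate the statement into a question about the complementary family on one layer and then to split according to the size of $m$. Write $\cB=\binom{[n]}{k}\setminus\cF$, so $|\cB|=m$, and call a set $G\in\binom{[n]}{\ge k+1}$ \emph{bad} if fewer than $(1-\varepsilon)k$ of its $k$-subsets lie in $\cF$; equivalently, if $|G|=k+j$, more than $\binom{k+j}{k}-(1-\varepsilon)k$ of its $k$-subsets lie in $\cB$. First I would reduce to $j=1$. If $j\ge 2$, a bad $(k+j)$-set has at least $\tfrac12\binom{k+j}{k}$ of its $k$-subsets in $\cB$, so $\binom{k+j}{k}\le 2m$, and together with $\log m=o(n)$ and $k=(\tfrac12+o(1))n$ this confines $j$ to a slowly growing range $j\le J(n)$. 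Moreover every $(k+j-1)$-subset of a bad $(k+j)$-set is itself bad, so if $N_j$ denotes the number of bad $(k+j)$-sets then $(k+j)N_j\le(n-k-j+1)N_{j-1}$; since $n-k\le k+2\sqrt n$ this gives $N_j\le(1+O(1/\sqrt n))^{j-1}N_1$, and summing over $j\le J(n)$ bounds the total number of bad sets by $N_1$ times a factor governed only by $J(n)$. Hence it suffices to prove $N_1=o(m)$ with a little room to spare.

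For $N_1$ itself I would use that two distinct $(k+1)$-sets share at most one $k$-subset, so $\{\{B\in\cB:B\subset G\}:G\text{ bad}\}$ is a \emph{linear} hypergraph on the vertex set $\cB$ whose edges all have size more than $\varepsilon k$. Counting pairs of vertices lying in a common edge (a Fisher-type inequality) gives $N_1<\binom{m}{2}/\binom{\lceil\varepsilon k\rceil}{2}=O\!\left(m^2/(\varepsilon k)^2\right)$, which is already $o(m)$ whenever $m=o(\varepsilon^2 k^2)$, i.e.\ for all $m$ up to roughly $n^2$. For larger $m$ one must exploit $\log m=o(n)$ in an essential way. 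The guiding principle is that bad $(k+1)$-sets are expensive: to each bad $G$ attach a $t$-set $Z_G\subseteq G$ with $t=\lceil\varepsilon k\rceil$ and $G\setminus z\in\cB$ for all $z\in Z_G$, and the core $I_G=G\setminus Z_G$ of size $k+1-t$; then, in the ground set $[n]\setminus I_G$, the restriction $\{B\setminus I_G:I_G\subseteq B\in\cB\}$ contains all $(t-1)$-subsets of the $t$-set $Z_G$. Since $\log m=o(n)$ while $t=\Theta(n)$, a family of $(t-1)$-sets of size at most $m$ has Kruskal--Katona parameter only $(1+o(1))t$, and hence can contain the complete $(t-1)$-shadow of only very few $t$-sets (this is where \tref{shadow} enters); pushing this information back to the cores should bound $N_1$ in a way that beats the Fisher bound exactly in the range of large $m$ and, using that $m\le\binom{\varepsilon' n}{\varepsilon' n/2}$ for every fixed $\varepsilon'>0$, deliver $N_1=o(m)$.

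The hard part is precisely this large-$m$ analysis. Straight double counting (each bad $(k+1)$-set uses $>\varepsilon k$ members of $\cB$, each member lies in at most $n-k$ such sets) is off by a constant factor $\Theta(1/\varepsilon)$ and gives only $N_1=O(m)$; the naive remedy of passing to the link of a popular vertex halves both the family and the bad-set count, which compounds to a fatal loss of $2^{\Theta(\varepsilon n)}$ over the $\approx\varepsilon k$ steps needed to exhaust the badness threshold. So the argument must instead localise to the cores $I_G$ and show that bad sets sharing a core are rare because their witnessing $\cB$-members populate a subcube of dimension $\Theta(\varepsilon n)$, which would force $m\ge\binom{\Theta(\varepsilon n)}{\Theta(\varepsilon n)}$ and contradict $\log m=o(n)$. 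Making this ``common core'' count efficient — so that the unavoidable over-counting of $\cB$-members across distinct cores does not swamp the gain — is the delicate point I expect to require the most care.
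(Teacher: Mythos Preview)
Your reduction from $j\ge 2$ to $j=1$ rests on the correct observation that every $(k{+}1)$-subset of a bad set is itself bad; the paper uses the same fact but packages it more cleanly via Kruskal--Katona. Since $\Delta_{k+1}(\cG_l)\subseteq\cG_{k+1}$, writing $|\cG_{k+1}|=\binom{x}{k+1}$ gives $|\cG_l|\le\binom{x}{l}$ for every $l>k+1$; then $\log|\cG_{k+1}|=o(n)$ forces $x=k+1+o(k)$, so $\binom{x}{l+1}\le\tfrac12\binom{x}{l}$ and $\sum_{l}|\cG_l|\le 2|\cG_{k+1}|$. This sidesteps your $J(n)$-bookkeeping and the need for ``room to spare''.

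The genuine gap is at $j=1$ for large $m$. Your linear-hypergraph count is in fact the \emph{same} double count the paper runs: both bound $N_1\cdot\binom{\lceil\varepsilon k\rceil}{2}$ by the number of pairs $F_1,F_2\in\cB$ with $|F_1\cap F_2|=k-1$ (each such pair determines $F_1\cup F_2$, so distinct bad $(k{+}1)$-sets use disjoint pairs). You cap this pair count by the trivial $\binom{m}{2}$, which is why your argument stalls at $m\approx(\varepsilon k)^2$; the ``core'' scheme you outline for larger $m$ is not carried out, and you yourself flag the over-counting across different cores as unresolved. The paper's replacement for $\binom{m}{2}$ is an edge-isoperimetric inequality in the Johnson graph $H(n,k)$: the pair count equals $e(\cB)$, shifts $\tau_{i,j}$ can only increase $e(\cB)$, a left-shifted family corresponds to a downset in $L_{k,n-k}$, and a single element of rank $\ge\delta n^2$ in that downset already forces it to have size at least $\binom{\delta n}{\delta n/2}$. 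Since $\log m=o(n)$ means $m<\binom{\delta n}{\delta n/2}$ for every fixed $\delta>0$, one gets $e(\cB)\le\delta mn^2$ for every $\delta>0$, hence $N_1\cdot\binom{\lceil\varepsilon k\rceil}{2}=o(mn^2)$ and $N_1=o(m)$ uniformly over the whole range of $m$. This isoperimetric step (the paper's \pref{isoperi}) is precisely the missing idea your sketch is groping toward.
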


Before we start the proof of \tref{wrongsets} let us introduce some notation and an \textit{isoperimetric problem} due to Kleitman and West (according to Harper \cite{H}). Given a graph $G$ and a positive integer $m\le |V(G)/2|$, the isoperimetric problem asks for the minimum number of edges $e(X,V(G)\setminus X)$ that go between an $m$-element subset $X$ of $V(G)$ and its complement. For regular graphs, this problem is equivalent to finding the the maximum number of edges $e(X)$ in an subgraph of $G$ induced by an $m$-subset $X$ of the vertices. Indeed, in a $d$-reguar graph we have $d|X|=2e(X)+e(X, V(G)\setminus X)$.

Kleitman and West asked \cite{Kl2,Kl3} for the solution of the isoperimetric problem in the Hamming graph $H(n,k)$ whose vertex set is $\binom{[n]}{k}$ and two $k$-subsets are connected if their intersection has size $k-1$. Harper \cite{H} introduced and solved a continuous version of this problem. (For more on isoperimetric problems see e.g. \cite{H2}.) Here we summarize some of his findings. The shift operation $\tau_{i,j}$ is a widely used tool in extremal set theory. It is defined by
\begin{align*}
\tau_{i,j}(F)=\left\{
\begin{array}{cc} 
F\setminus \{j\} \cup \{i\} & \textnormal{if}\ ~ j \in F,
i \notin F ~\textnormal{and} ~ F\setminus \{j\} \cup \{i\} \notin \cF\\
F & \textnormal{otherwise.}
\end{array}
\right.
\end{align*}
And the shift of a family is defined as $\tau_{i,j}(\cF)=\{\tau_{i,j}(F):F \in \cF \}$.

Harper proved that in the Hamming graph we have $e(\cF) \le e(\tau_{i,j}(\cF))$ for any family $\cF \subseteq \binom{[n]}{k}$ and $i,j \in [n]$. Therefore, it is enough to consider the isoperimetric problem for \textit{left shifted} families, i.e. families for which $\cF=\tau_{i,j}(\cF)$ holds for all pairs $i<j$.

The characteristic vector of a subset $F$ of $[n]$ is a $0-1$ vector $x_F$ of length $n$ with $x_F(i)=1$ if $i\in F$ and $x_F(i)=0$ if $i \notin F$. 0-1 vectors with exactly $k$ one entries are clearly in one-to-one correspondence with $\binom{[n]}{k}$. But also, one can consider non-negative integer vectors of length $k$ for any set $F \in \binom{[n]}{k}$ such that $y_F(j)=i_j-j$ where $i_j$ is the index of the $j$th one entry of $x_F$. For any set $F \in\binom{[n]}{k}$ the entries of $y_F$ are non-decreasing, as $i_j-j$ is the number of zero coordinates of $x_F$ before the $j$th 1-coordinate. Also, $0 \le y_F(1) \le y_F(2) \le \dots \le y_F(k) \le n-k$ hold. 

Such vectors form the poset $L_{k,n-k}$ under coordinatewise ordering, i.e. $L_{a,b} =\{x \in [0,b]^a: x(1)\le x(2) \le \dots \le x(a)\}$ and $x\le_{L_{a,b}} y$ if and only if $x(i) \le y(i)$ for all $1 \le i \le a$. It was shown by Harper that a family $\cF\subset \binom{[n]}{k}$ is left shifted if and only if the set $\{y_F: F \in \cF\}$ is a \textit{downset} in $L_{k,n-k}$ (a set $D$ is a downset in a poset $P$ if $d'\le_P d \in D$ implies $d' \in D$). If $F,F'$ are endpoints of an edge in $H(n,k)$, then for some $i\neq j$ we have $x_F(i)=0, x_{F'}(i)=1,x_F(j)=1,x_{F'}(j)=0$ and $x_F(l)=x_{F'}(l)$ for all $l\in [n]$, $l\neq i,j$. If $i<j$, then this means that $F'$ could be obtained from $F$ by using $\tau_{i,j}$ and therefore $y_{F'}\le_{L_{k,n-k}} y_F$ holds. Moreover, the number of edges for which $F$ is the ``upper endpoint" is $r(y_F)=\sum_{i=1}^ky_F(i)$. If $\cF$ is left shifted and $F \in \cF$, then all lower endpoints of such edges belong to $\cF$, thus the number of edges spanned by $\cF$ in $H(n,k)$ is $\sum_{F \in \cF}r(y_F)$. Therefore, the isoperimetric problem in $H(n,k)$ is equivalent to maximizing $\sum_{y \in Y}r(y)$ over all downsets $Y \subset L_{k,n-k}$ of a fixed size.

We will use only the following simple observation to prove \tref{wrongsets}. 
\begin{prop}
\label{prop:isoperi} Let $\cF \subseteq \binom{[n]}{k}$ be a family of size $m < \binom{\delta n}{\delta n/2}$, with $0\le \delta <1$ and $k>\delta n/2$. Then in $H(n,k)$ we have $e(\cF) \le \delta mn^2$.
\end{prop}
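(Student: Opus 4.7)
The plan is to combine Harper's left-shifting reduction with a single downset-containment observation. Since Harper's theorem (recalled in the text) guarantees $e(\cF)\le e(\tau_{i,j}(\cF))$ for every $i<j$, I may assume without loss of generality that $\cF$ is left-shifted. Then $Y:=\{y_F:F\in\cF\}$ is a downset in $L_{k,n-k}$ of size $m$, and the formula $e(\cF)=\sum_{y\in Y}r(y)$, with $r(y)=\sum_{i=1}^ky(i)$, reduces the problem to proving $\sum_{y\in Y}r(y)\le \delta mn^2$.

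The heart of the argument will be a \emph{pointwise} estimate $r(y)\le \delta n^2$ valid for every $y\in Y$; the proposition then follows immediately by summing the $m$ such bounds. To establish this, set $s:=\lceil \delta n/2\rceil$ and suppose, for contradiction, that some $y\in Y$ satisfies $y(k-s+1)\ge s$. By the non-decreasing property of $y$, all of its last $s$ coordinates are then $\ge s$, so $y$ dominates coordinatewise the vector $u^*\in L_{k,n-k}$ whose first $k-s$ entries are $0$ and whose last $s$ entries equal $s$. The downset property of $Y$ now forces the entire principal downset of $u^*$ into $Y$; this downset consists of those non-decreasing $z$ with $z(i)=0$ for $i\le k-s$ and $z(i)\in\{0,1,\dots,s\}$ for $i>k-s$, and by stars-and-bars there are exactly $\binom{2s}{s}\ge\binom{\delta n}{\delta n/2}$ of them, contradicting $m<\binom{\delta n}{\delta n/2}$.

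Hence $y(k-s+1)\le s-1$ for every $y\in Y$. Combining this with monotonicity (which forces $y(i)\le s-1$ for $i\le k-s+1$) and the crude bound $y(i)\le n-k$ otherwise, I would split $r(y)$ at position $k-s+1$ to obtain
\[
r(y)\le (k-s+1)(s-1)+(s-1)(n-k)=(s-1)(n-s+1)\le sn\le \delta n^2,
\]
once $\delta n\ge 2$, so that the rounding loss in $s$ is absorbed. Summing over $y\in Y$ then yields $e(\cF)\le \delta mn^2$. The degenerate ranges $s>k$ or $s>n-k$, where $u^*$ cannot be placed inside $L_{k,n-k}$, are covered directly by the trivial bound $r(y)\le k(n-k)\le \delta n^2/2$.

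The main (and essentially only) technical point will be the identification of this critical vector $u^*$: once one recognises that the downset in $L_{k,n-k}$ of the $s\times s$ ``staircase'' has cardinality exactly $\binom{2s}{s}$, both the pointwise bound and the summation are elementary, and Harper's shift monotonicity can be treated as a black box.
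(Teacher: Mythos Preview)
Your proof is correct and follows essentially the same route as the paper's: reduce to a left-shifted family via Harper, pass to the downset $Y\subset L_{k,n-k}$, and argue that a vector with large rank would force too many elements into $Y$ by the downset property. The paper phrases this as a contradiction on the sum (averaging to find one $y$ with $r(y)\ge\delta n^2$), whereas you obtain the cleaner pointwise bound $r(y)\le\delta n^2$ for every $y\in Y$; and the paper chooses the threshold index $n-k-\delta n/2$ while you use $k-s+1$, which coincide when $k\approx n/2$ but your choice works uniformly in $k$ and makes the stars-and-bars count $\binom{2s}{s}$ explicit. These are presentational differences only; the key mechanism is identical.
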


\begin{proof}
Suppose not and let $\cF$ be a left shifted counterexample and thus we have $\sum_{F \in \cF}r(y_F)\ge \delta mn^2$. Therefore, there must be an $F \in \cF$ with $r(y_F) \ge \delta n^2$. Note that for such a vector, we have $y_F(k-\delta n/2)\ge \delta n/2$ as otherwise $r(y_F) \le r(y^*)\le \delta n^2$ would hold where $y^*(i)=\delta n/2$ if $i\le k-\delta n/2$ and $y^*(i)=n-k$ if $i> k-\delta n/2$.
As $\cF$ is left shifted, the set $Y_{\cF}=\{y_F: F \in \cF\}$ is a downset in $L_{k,n-k}$. Any vector $y\in L_{k,n-k}$ with $y_i=0$ for $i\le k-\delta n/2$ and $y(i)\le \delta n/2$ for $i> k-\delta n/2$ satisfies $y \le_{L_{k,n-k}} y_F$. Therefore, all those vectors belong to $Y_{\cF}$. The number of such vectors is $\binom{\delta n}{\delta n/2}$. This contradicts the assumption $m < \binom{\delta n}{\delta n/2}$.
\end{proof}

Now we are ready to prove \tref{wrongsets}.

\begin{proof}[Proof of \tref{wrongsets}] Let $\overline{\cF}=\binom{[n]}{k} \setminus \cF$ and thus $|\overline{\cF}|=m$. We want to bound the number of sets of which the shadow is contained in $\overline{\cF}$ with the exception of at most $(1-\varepsilon)k$ sets. Let $\cG \subset 2^{[n]}$ denote the family of such sets and write $\cG_l=\{G \in \cG: |G|=l\}$. To bound $|\cG_{k+1}|$ we double count the pairs $F_1,F_2$ of sets in $\overline{\cF}$ with $|F_1 \cap F_2|=k-1$. As $\overline{\cF}$ has size $m$, by applying \pref{isoperi} with a sequence $\delta_n \rightarrow 0$, we obtain the number of such pairs is $o(mn^2)$. On the other hand for every such pair there exists at most one $G \in \cG_{k+1}$ with $F_1,F_2 \subset G$ (namely, $F_1 \cup F_2$). Thus the number of such pairs is at least $|\cG_{k+1}|\binom{\varepsilon k}{2}$. Therefore, we obtain $|\cG_{k+1}|\binom{\varepsilon k}{2}=o(mn^2)$. Rearranging and the assumption on $k$ yields that $|\cG_{k+1}|=o(m)$.

To bound $|\cG_l|$ for values of $l$ larger than $k+1$, observe that $\Delta_{k+1}(\cG_l) \subseteq \cG_{k+1}$ holds for all $l >k+1$. Let $x$ denote the real number for which $|\cG_{k+1}|=\binom{x}{k+1}$ holds. By \tref{shadow}, we obtain that $|\cG_l| \le \binom{x}{l}$ holds. By the assumption on $m$ and $k$, we see that $x=k+1+o(k)$ and thus by \pref{change} (f) we have $\binom{x}{l+1} \le \frac{1}{2}\binom{x}{l}$. This gives
\[
|\cG|=\sum_{l=k+1}^n|\cG_l|\le \sum_{l=k+1}^n\binom{x}{l} \le 2|\cG_{k+1}|=o(m). \hfill \qedhere
\]
\end{proof}

We will apply \tref{wrongsets} only with $k=\lfloor n/2\rfloor -1, \lfloor n/2\rfloor, \lfloor n/2\rfloor+1$ and $\lfloor n/2\rfloor+2$.

Now we are ready to prove our main result that we recall here below. Note that by \pref{constr} we only have to deal with families $\cF \subseteq 2^{[n]}$ that do not contain any $\cS \in \Sigma^*(n,2)$. When bounding the number of butterflies in $\cF$ we will distinguish two cases depending on $m=\min_{\cS\in \Sigma^*(n,2)}|\cS\setminus \cF'|$. In the harder case, when $m$ is small, we will only count copies $F,F_1,F_2,F_3\in \cF$ where $F\in  \binom{[n]}{\ge \lceil n/2 \rceil +1}$, $F_1 \in \binom{[n]}{\lceil n/2\rceil}$, $F_2,F_3 \in \binom{[n]}{\lceil n/2 \rceil -1}$ with $F_2,F_3\subset F_1 \subset F$ or $F\in  \binom{[n]}{\le \lceil n/2 \rceil -2}$, $F_1 \in \binom{[n]}{\lceil n/2\rceil-1}$, $F_2,F_3 \in \binom{[n]}{ \lceil n/2 \rceil}$ with $F_2,F_3\supset F_1 \supset F$.

\begin{theorem:repeatmain}
Let $\cF\subseteq 2^{[n]}$ be a family of sets with $|\cF|=\Sigma(n,2)+E$.

\textbf{(a)} If $E=E(n)$ satisfies $\log E=o(n)$, then the number of butterflies contained by $\cF$ is at least $(1-o(1))E\cdot f(n)$. 

\textbf{(b)} Furthermore, if $E \le \frac{n}{100}$, then the number of butterflies contained by $\cF$ is at least $E\cdot f(n)$.
\end{theorem:repeatmain}

\begin{proof}[Proof of \tref{main} part (a)]
Let $\cF \subseteq 2^{[n]}$ be a family containing $\Sigma(n,2)+E$ sets and let $\cF'$ be a maximum size butterfly-free subfamily of $\cF$. Let $m$ be defined by $\min_{\cS\in \Sigma^*(n,2)}|\cS\setminus \cF'|$. If $m \ge 6f(n)E$ holds, then by \tref{butterflystab} we have $|\cF'| \le \Sigma(n,2)-Ef(n)$ and thus $|\cF\setminus \cF'| \ge E(f(n)+1)$. As $\cF'$ is a maximum butterfly-free subfamily of $\cF$, every set $F \in \cF\setminus \cF'$ forms a butterfly with 3 other sets from $\cF'$. Thus the number of butterflies in $\cF$ is at least $|\cF\setminus \cF'|$. This finishes the proof if $m \ge 6f(n)E$ holds.

Suppose next that $m \le 6f(n)E$ holds. Note that as $f(n) \le n^3$, $m\le 6n^3E=o(\binom{\varepsilon n}{\varepsilon n/2})$ for any positive $\varepsilon$. Without loss of generality we can assume that $|(\binom{[n]}{\lceil n/2\rceil-1} \cup \binom{[n]}{\lceil n/2 \rceil})\setminus \cF|=m$ and thus $|\cF \setminus (\binom{[n]}{\lceil n/2\rceil-1} \cup \binom{[n]}{\lceil n/2 \rceil})|=m+E$ hold. Let us write $k=\lceil n/2 \rceil -1$ and fix an $\varepsilon >0$ and pick $\varepsilon'>0$ with the property that $(1-\varepsilon')^4/2\ge 1-\varepsilon$. Applying \tref{wrongsets} to $\cF \cap \binom{[n]}{k}$ we obtain that the family $\cF_{b, k+1}=\{F \in \cF \cap \binom{[n]}{k+1}:|\Delta_k(F) \cap \cF|\le (1-\varepsilon') k\}$ has size $o(m)$. Let us apply \tref{wrongsets} again, this time to $\cF_{g,k+1}=(\cF \cap \binom{[n]}{k+1})\setminus \cF_{b,k+1}$. We obtain that the family $\cF_{b,\ge k+2}=\{F \in \binom{[n]}{\ge k+2}: |\Delta_{k+1}(F)\cap \cF_{g,k+1}|\le (1-\varepsilon') k\}$ has size $o(m)$. With an identical argument applied to $\overline{\cF}=\{[n]\setminus F: F \in \cF\}$, one can show that the families $\cF_{b,k}=\{F \in \cF \cap \binom{[n]}{k}:|\nabla_{k+1}(F) \setminus \cF|\le (1-\varepsilon') k\}$ and
$\cF_{b,\le k-1}=\{F \in \cF\cap \binom{[n]}{\le k-1}: |\nabla_{k}(F)\cap (\cF\setminus \cF_{b,k})| \le (1-\varepsilon')k\}$ both have size $o(m)$.

Let us pick a set $F \in \cF_g=\cF\setminus (\binom{[n]}{k}\cup \binom{[n]}{k+1}\cup \cF_{b,\le k-1} \cup \cF_{b,\ge k+2})$ and note that the number of such sets is $m+E-o(m)\ge E$. 
\begin{claim}
For every $F \in \cF_g$ there exist at least $(1-\varepsilon)f(n)$ copies of the butterfly poset that contain only $F$ from $\cF\setminus (\binom{[n]}{k}\cup \binom{[n]}{k+1})$.
\end{claim}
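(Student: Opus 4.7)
The plan is to verify the claim for each $F\in\cF_g$ with $|F|\ge k+2$; the case $|F|\le k-1$ is handled identically after passing to complements (and in fact yields a slightly larger count when $n$ is even).

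Fix $F\in\cF_g$ with $|F|\ge k+2$. The butterflies I aim to produce have image set $\{F,F_1,F_2,F_3\}$ with $F_1\in\cF\cap\binom{[n]}{k+1}$, $F_2,F_3\in\cF\cap\binom{[n]}{k}$, and $F_2,F_3\subset F_1\subset F$; each such $4$-set is a single butterfly copy (bottoms $\{F_2,F_3\}$, tops $\{F_1,F\}$), its only element outside $\binom{[n]}{k}\cup\binom{[n]}{k+1}$ is $F$, and distinct triples $(F_1,\{F_2,F_3\})$ give distinct $4$-sets since $F_1$ is the unique member of size $k+1$ in the image.

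I then harvest two structural facts already extracted by \tref{wrongsets}: since $F\notin\cF_{b,\ge k+2}$, the family $\cA(F):=\{F_1\in\cF_{g,k+1}:F_1\subset F\}$ satisfies $|\cA(F)|\ge (1-\varepsilon')k$; and since each $F_1\in\cA(F)$ lies in $\cF_{g,k+1}$, the number of $k$-subsets of $F_1$ belonging to $\cF$ is likewise at least $(1-\varepsilon')k$. Hence the number of butterflies of the stated shape with top element $F$ is at least
\[
|\cA(F)|\cdot\binom{(1-\varepsilon')k}{2}\;\ge\;(1-\varepsilon')k\cdot\binom{(1-\varepsilon')k}{2}.
\]
Dividing by $f(n)=(k+2)\binom{k+1}{2}$ gives
\[
\frac{(1-\varepsilon')k\cdot\binom{(1-\varepsilon')k}{2}}{(k+2)\binom{k+1}{2}}=(1-\varepsilon')^2\cdot\frac{k}{k+2}\cdot\frac{(1-\varepsilon')k-1}{k+1}\;\longrightarrow\;(1-\varepsilon')^3\quad\text{as }n\to\infty,
\]
and since $\varepsilon'$ was chosen so that this limit exceeds $1-\varepsilon$, the claim will follow for all sufficiently large $n$.

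The only obstacle is purely arithmetic: checking that the three multiplicative losses (one factor of $(1-\varepsilon')$ from $|\cA(F)|$ and two more from $\binom{(1-\varepsilon')k}{2}$) combine to $(1-\varepsilon')^3$ in the limit, matching the pre-fixed $\varepsilon'$. There is no genuine combinatorial difficulty remaining, since \tref{wrongsets} supplies in a single isoperimetric stroke both that almost every $F\in\cF_g\cap\binom{[n]}{\ge k+2}$ dominates nearly $k$ good $(k+1)$-sets and that each such $(k+1)$-set in turn dominates nearly $k$ $k$-sets of $\cF$; the butterfly count is then merely the product of these two quantities with the intervening binomial coefficient.
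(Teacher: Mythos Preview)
Your proof is correct and follows essentially the same route as the paper: for $|F|\ge k+2$ you use $F\notin\cF_{b,\ge k+2}$ to locate at least $(1-\varepsilon')k$ sets $F_1\in\cF_{g,k+1}$ below $F$, then for each such $F_1$ use $F_1\notin\cF_{b,k+1}$ to find at least $(1-\varepsilon')k$ $k$-subsets in $\cF$, giving the lower bound $(1-\varepsilon')k\binom{(1-\varepsilon')k}{2}$, and the symmetric argument handles $|F|\le k-1$. Your limiting arithmetic yielding $(1-\varepsilon')^3$ is in fact slightly cleaner than the paper's cruder bound $(1-\varepsilon')^4k^3/2$, and your explicit remark that distinct triples $(F_1,\{F_2,F_3\})$ give distinct image sets is a point the paper leaves implicit.
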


\begin{proof}[Proof of Claim]
Assume $|F|\ge k+2$. Then as $F\in\cF_g$ there are at least $(1-\varepsilon)k$ sets $F'$ in $\Delta_{k+1}(F)\cap \cF_{g,k+1}$. For every $F' \in \cF_{g,k+1}$ we have $|\Delta_k(F') \cap \cF|\ge (1-\varepsilon')k$. Since every four-tuple $F,F',F_1,F_2$ forms a butterfly where $F_1,F_2 \in \Delta_k(F') \cap \cF$ we obtain that the number of butterflies containing only $F$ from $\cF\setminus (\binom{[n]}{k}\cup \binom{[n]}{k+1})$ is at least $(1-\varepsilon')k\binom{(1-\varepsilon')k}{2}\ge (1-\varepsilon')^4k^3/2\ge (1-\varepsilon)f(n)$ if $n$ and thus $k$ are large enough. The proof of the case when $|F|\le k-1$ is similar.
\end{proof}
The above claim finishes the proof of \tref{main} part (a).
\end{proof}
\vskip 0.5truecm

To obtain part (b) of \tref{main} we need better bounds on the number of ``bad sets''. We start with the following folklore proposition.

\begin{prop}
\label{prop:folklore}
Let $U_1,\dots, U_l$ be sets of size $u$ such that $|U_i \cap U_j|\le 1$ holds for any $1\le i<j\le l$. Then we have $|\bigcup_{i=1}^lU_i|\ge l\cdot \frac{2u-l}{2}$.
\end{prop}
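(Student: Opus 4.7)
The plan is to apply the second-order Bonferroni inequality
\[
\Bigl|\bigcup_{i=1}^{l} U_i\Bigr| \;\ge\; \sum_{i=1}^{l} |U_i| \;-\; \sum_{1\le i<j\le l} |U_i \cap U_j|.
\]
The first sum is exactly $lu$ by the uniformity of the $U_i$, and the hypothesis $|U_i \cap U_j|\le 1$ bounds the second sum by $\binom{l}{2} = l(l-1)/2$. Putting these together gives
\[
\Bigl|\bigcup_{i=1}^{l} U_i\Bigr| \;\ge\; lu - \frac{l(l-1)}{2} \;=\; l \cdot \frac{2u-l+1}{2} \;\ge\; l \cdot \frac{2u-l}{2},
\]
which is the claimed bound. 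The assumption $l \le u$ is only used to guarantee that $2u-l \ge u > 0$, so the conclusion is a genuine lower bound rather than a vacuous one.

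The only point that warrants elaboration is justifying Bonferroni for set unions (the paper does not invoke it elsewhere). I would include a one-line induction on $l$: for $l=1$ it is trivial, and for the inductive step I would write
\[
\Bigl|\bigcup_{i=1}^{l} U_i\Bigr| \;=\; \Bigl|\bigcup_{i=1}^{l-1} U_i\Bigr| + |U_l| - \Bigl|U_l \cap \bigcup_{i=1}^{l-1} U_i\Bigr| \;\ge\; \Bigl|\bigcup_{i=1}^{l-1} U_i\Bigr| + |U_l| - \sum_{i=1}^{l-1}|U_i \cap U_l|,
\]
and then apply the induction hypothesis to $U_1,\dots,U_{l-1}$.

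I do not anticipate any substantive obstacle: the statement is a one-step application of a standard inequality, and the pairwise-intersection hypothesis is used in the crudest possible way. An alternative (essentially equivalent) route would be to double count via the degree sequence $d(x) = |\{i : x \in U_i\}|$, using $\sum_x d(x) = lu$, $\sum_x \binom{d(x)}{2} \le \binom{l}{2}$, and the trivial estimate $d(x)\ge 1$ on the support, but the Bonferroni route is shorter and matches the paper's style.
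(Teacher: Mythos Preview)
Your proof is correct and essentially identical to the paper's. The paper observes $|U_i\setminus\bigcup_{j<i}U_j|\ge u-(i-1)$ and sums over $i$; this is exactly your inductive step for Bonferroni, using $|U_i\cap\bigcup_{j<i}U_j|\le\sum_{j<i}|U_i\cap U_j|\le i-1$, and both arguments yield the same bound $\sum_{i=1}^l(u-i+1)=l(2u-l+1)/2$ (neither proof actually needs the hypothesis $l\le u$).
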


\begin{proof}
By the condition on the intersection sizes we have $|U_i\setminus \bigcup_{j=1}^{i-1}U_j|\ge u-i+1$ and thus  $|\bigcup_{i=1}^lU_i| \ge \sum_{i=1}^lu-i+1$.
\end{proof}

\begin{corollary}
\label{cor:wrongsetsplus} Let $\cF \subset \binom{[n]}{k}$ with $|\cF|=\binom{n}{k}-m$. Then the number of sets $G$ in $\binom{[n]}{k+1}$ that contain fewer than $k+1-2\sqrt{m}$ sets from $\cF$ is at most $\sqrt{m}$ provided $m \le k^2$ and $n/2- \sqrt{n}\le k \le n/2+ \sqrt{n}$. The number of such sets from $\binom{[n]}{\ge k+1}$ is at most $2\sqrt{m}$.
\end{corollary}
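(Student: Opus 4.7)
The plan is to pass to the complement $\overline{\cF}:=\binom{[n]}{k}\setminus\cF$ of size $m$ and apply Proposition~\ref{prop:folklore}. Observe that a set $G\in\binom{[n]}{k+1}$ contains fewer than $k+1-2\sqrt{m}$ members of $\cF$ precisely when $U_G:=\binom{G}{k}\cap\overline{\cF}$ has size strictly greater than $2\sqrt{m}$. Let $\cH$ denote the family of such $G$'s, so the first claim is that $|\cH|\le\sqrt{m}$.

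First I would check the hypotheses of Proposition~\ref{prop:folklore}: for distinct $G_1,G_2\in\cH$, any common element of $U_{G_1}\cap U_{G_2}$ would be a $k$-subset of $G_1\cap G_2$, a set of size at most $k$, so $|U_{G_1}\cap U_{G_2}|\le 1$. After truncating each $U_G$ to a uniform size $u>2\sqrt{m}$, suppose for contradiction $|\cH|\ge\sqrt{m}+1$, and pick $l=\lceil\sqrt{m}\rceil+1$ of the $U_G$'s (so that $l\le u$). The proposition then yields
\[
m\ =\ |\overline{\cF}|\ \ge\ \Bigl|\bigcup_{G\in\cH}U_G\Bigr|\ \ge\ \frac{l(2u-l)}{2}\ \ge\ \frac{(\sqrt{m}+1)(3\sqrt{m}-1)}{2}\ >\ m,
\]
a contradiction; hence $|\cH|\le\sqrt{m}$.

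For the second claim, I extend the definition to $\cH':=\{G\in\binom{[n]}{\ge k+1}:|\binom{G}{k}\cap\cF|<k+1-2\sqrt{m}\}$. The crucial monotonicity is that if $G\in\cH'$ has $|G|\ge k+2$, then for every $(k+1)$-subset $G'\subset G$ the inclusion $\binom{G'}{k}\subseteq\binom{G}{k}$ forces $G'\in\cH$; hence $\binom{G}{k+1}\subseteq\cH$. This gives $k+2\le\binom{|G|}{k+1}\le|\cH|\le\sqrt{m}$, and combined with $m\le k^2$ (so $\sqrt{m}\le k$) produces the absurdity $k+2\le k$. Thus $\cH'=\cH$ and $|\cH'|\le\sqrt{m}\le 2\sqrt{m}$. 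I do not foresee a genuine obstacle in this plan; the only mild bookkeeping concerns the integer rounding in the displayed inequality, which is comfortably absorbed by the roughly $3/2$ gap between the two sides.
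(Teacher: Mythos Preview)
Your argument for the first statement is essentially the paper's: pass to the complement, note that the families $U_G$ have pairwise intersections of size at most one, and feed them into Proposition~\ref{prop:folklore} to obtain a contradiction with $|\overline{\cF}|=m$.

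For the second statement you take a different and in fact cleaner route. The paper simply says ``as in the proof of \tref{wrongsets}'', meaning one observes $\Delta_{k+1}(\cG_l)\subseteq\cG_{k+1}$ and then invokes the Lov\'asz form of Kruskal--Katona to bound $\sum_{l>k+1}|\cG_l|$ by a geometric series dominated by $|\cG_{k+1}|$. You instead exploit the same monotonicity $\binom{G'}{k}\subseteq\binom{G}{k}$ directly: any bad $G$ of size at least $k+2$ would force all of its $\binom{|G|}{k+1}\ge k+2$ subsets of size $k+1$ into $\cH$, contradicting $|\cH|\le\sqrt{m}\le k$. This avoids Kruskal--Katona entirely, does not use the hypothesis on $k$, and actually shows $\cH'=\cH$, so the bound $2\sqrt{m}$ can be replaced by $\sqrt{m}$. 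The rounding slack you note is real but harmless, exactly as in the paper's own computation.
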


\begin{proof}
For any set $G\in\binom{[n]}{k+1}$ with $|\Delta_k(G)\cap \cF|<k+1-2\sqrt{m}$ one can consider a family $\cH_G$ of $2\sqrt{m}$ sets from $\Delta_k(G)\setminus \cF$. Clearly, if $G'\in\binom{[n]}{k+1}$ is another set with $|\Delta_k(G)\cap \cF|<k+1-2\sqrt{m}$, then $|\cH_G \cap \cH_{G'}|\le |\Delta_k(G)\cap \Delta_k(G')| \le 1$. The sets $\cH_G$ satisfy the condition of \pref{folklore}. Thus if the number of such $G$'s is more than $\sqrt{m}$, then $|\overline{\cF}|>\sqrt{m}\cdot\frac{4\sqrt{m}-\sqrt{m}}{2}=m$ which is a contradiction.

The proof of the second statement that deals with sets of larger size is as in the proof of \tref{wrongsets}.
\end{proof}

\begin{proof}[Proof of \tref{main} part (b)]
Let $\cF \subseteq 2^{[n]}$ be a family of sets with $|\cF|=\Sigma(n,2)+E$ where $E=E_n\le \frac{n}{100}$. Let $m$ be defined by $\min_{\cS\in \Sigma^*(n,2)}|\cS\setminus \cF|$. We will write $k+1=\lceil n/2 \rceil$ and assume that $m=(\binom{[n]}{k}\cup\binom{[n]}{k+1})\setminus \cF$.
We will consider four cases with respect to $m$.

\vskip 0.3truecm

\textsc{Case I.} \hskip 0.3truecm $m \ge 6f(n)E$

Just as in the proof of \tref{main} part a), we consider a maximal butterfly-free subfamily $\cF' \subseteq \cF$ with $(\binom{[n]}{k}\cup\binom{[n]}{k+1})\cap \cF \subseteq \cF'$. By \cref{butt}, $|\cF'|< \Sigma(n,2)-f(n)E$ and thus $\cF$ contains at least $|\cF|-|\cF'|>Ef(n)$ copies of the butterfly poset.

\vskip 0.3truecm

\textsc{Case II.} \hskip 0.3truecm $\frac{n}{10} \le m <6f(n)E$

We again repeat the argument of part (a). By applying \tref{wrongsets} twice with $\varepsilon=1/4$, we obtain that for $E+m-o(m)\ge (11-o(1))E$ sets $F \in \cF \setminus (\binom{[n]}{k}\cup\binom{[n]}{k+1})$ the number of copies of the butterfly poset that contains only $F$ from $\cF \setminus (\binom{[n]}{k}\cup\binom{[n]}{k+1})$ is at least $(\frac{27}{64}-o(1))f(n)$ and thus the number of butterflies in $\cF$ is much larger than $f(n)E$.

\vskip 0.3truecm

\textsc{Case III.} \hskip 0.3truecm $50 \le m <\frac{n}{10}$

We try to imitate the proof of the second case of part (a). Applying \cref{wrongsetsplus} to $\cF \cap \binom{[n]}{k}$ we obtain that the family $\cF_{b, k+1}=\{F \in \cF \cap \binom{[n]}{k+1}:|\Delta_k(F) \setminus \cF|\le  k+1 -2\sqrt{m}\}$ has size at most $\sqrt{m}$. Let us apply \cref{wrongsetsplus} again, this time to $\cF_{g,k+1}=(\cF \cap \binom{[n]}{k+1})\setminus \cF_{b,k+1}$. We obtain that the family $\cF_{b,\ge k+2}=\{F \in \binom{[n]}{\ge k+2}: |\Delta_{k+1}(F)\cap \cF_{g,k+1}|\le k+2-2\sqrt{m}\}$ has size ar most $2\sqrt{m+\sqrt{m}}\le 3\sqrt{m}$. With an identical argument applied to $\overline{\cF}=\{[n]\setminus F: F \in \cF\}$, one can show that the families $\cF_{b,k}=\{F \in \cF \cap \binom{[n]}{k}:|\nabla_{k+1}(F) \setminus \cF|\le n-k-2\sqrt{m}\}$ and
$\cF_{b,\le k-1}=\{F \in \cF\cap \binom{[n]}{\le k-1}: |\nabla_{k}(F)\cap (\cF\setminus \cF_{b,k})| \le n-k+1-2\sqrt{m}\}$ both have size at most $3\sqrt{m}$.

Let us pick a set $F \in \cF_g=\cF\setminus (\binom{[n]}{k}\cup \binom{[n]}{k+1}\cup \cF_{b,\le k-1} \cup \cF_{b,\ge k+2})$ and note that the number of such sets is at least $m+E-6\sqrt{m}$. The number of copies of butterflies in $\cF$ with $F$ being the only member from $\cF \setminus (\binom{[n]}{k}\cup \binom{[n]}{k+1})$ is at least $(k+2-2\sqrt{m})\binom{k+1-2\sqrt{m}}{2}$, and thus the number of butterflies in $\cF$ is at least
\[
(E+m-6\sqrt{m})(k+2-2\sqrt{m})\binom{k+1-2\sqrt{m}}{2} \ge   (E+\sqrt{m})(k+2-2\sqrt{m})\binom{k+1-2\sqrt{m}}{2}
\]
\[
\ge   Ef(n)+\sqrt{m}\cdot \frac{k^3}{4}-E\frac{\sqrt{m}(k+2)^2}{2} >  Ef(n),
\]
where we used $m-6\sqrt{m}\ge\sqrt{m}$ as $m\ge 50$, $k-2\sqrt{m}=(1-o(1))k$ as $m\le \frac{n}{10}$ and also $E\le \frac{n}{100}$.

\vskip 0.3truecm

\textsc{Case IV.} \hskip 0.3truecm $0< m<50$

In this case, every set in $\binom{[n]}{\ge k+2}$ contains at least $k+2-m$ sets from $\cF \cap \binom{[n]}{k+1}$ and every set in $\binom{[n]}{\ge k+1}$ contains at least $k+1-m$ sets from $\cF \cap \binom{[n]}{k}$. Similar statements hold for sets in $\binom{[n]}{\le k}$ and $\binom{[n]}{k-1}$. Therefore, all $E+m$ sets $F$ of $\cF \setminus (\binom{[n]}{k}\cup\binom{[n]}{k+1})$ are contained in at least $(k+2-m)\binom{k+1-m}{2}$ butterflies that contain only $F$ from $\cF \setminus (\binom{[n]}{k}\cup\binom{[n]}{k+1})$. Thus the number of butterflies in $\cF$ is at least
$(E+m)(k+2-m)\binom{k+1-m}{2}\ge E(k+2)(k+1)k/2+m(k+2)(k+1)k/2-7Em(k+2)^2/4$. This is strictly larger than $Ef(n)=E(k+2)(k+1)k$ as $E\le n/10\le k/4$.

\vskip 0.3truecm

The case when $m$ equals 0, was dealt with in the introduction by \pref{constr}.
\end{proof}

\section{Concluding remarks}

As mentioned in the introduction, if we add sets $G_1,G_2,\dots ,G_E \in \binom{[n]}{\lceil n/2\rceil+1}$ to the family $\cF=\binom{[n]}{\lceil n/2\rceil-1}\cup \binom{[n]}{\lceil n/2\rceil}$ with the property that $|G_i \cap G_j| \le \lceil n/2 \rceil-1$ holds for all pairs $1\le i<j\le E$, then the number of butterflies in $\cF \cup \{G_1,G_2, \dots, G_E\}$ is $Ef(n)$. The size of a largest family of sets with this property is denoted by $K(n, \lceil n/2\rceil +1)$. We propose the following conjecture.

\begin{conjecture}
\label{conj:conj} Let $E=E(n)\le K(n,\lceil n/2\rceil +1)$. If $n$ is large enough, then the minimum number of butterflies a family $\cF \subset 2^{[n]}$ of size $\Sigma(n,2)+E$ must contain is $Ef(n)$.
\end{conjecture}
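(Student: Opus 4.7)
The plan is to extend the case analysis of the proof of \tref{main}(b) on the middle defect $m:=\min_{\cS\in\Sigma^*(n,2)}|\cS\setminus\cF|$ so as to cover the full conjectured range $E\le K(n,\lceil n/2\rceil+1)$. After the usual reduction one may assume the minimizer is $\cS^*=\binom{[n]}{k}\cup\binom{[n]}{k+1}$ with $k+1=\lceil n/2\rceil$, so that $\cF_1:=\cF\setminus\cS^*$ has exactly $E+m$ elements, none of size $k$ or $k+1$. The large-$m$ regime (say $m\ge 6Ef(n)$) follows at once from \tref{butterflystab} applied to a maximum butterfly-free subfamily, and the intermediate-$m$ regime is handled by \cref{wrongsetsplus} as in Case III of the original proof.

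The genuine new work lies in the very small $m$ regime coupled with large $E$. Here the plan is to lower-bound directly the number $B_1$ of butterflies containing exactly one member of $\cF_1$, which should already suffice since $|\cF_1|=E+m\ge E$. The introductory size-deviation calculation shows that any $F\in\cF_1$ of size $\notin\{k+2,k-1\}$ contributes strictly more than $f(n)$ to $B_1$ via singleton butterflies, so we may assume $\cF_1\subseteq \binom{[n]}{k+2}\cup\binom{[n]}{k-1}$. For $F$ of size $k+2$ one has the exact formula $\textnormal{sing}(F)=\sum_{c\subsetneq F,\,|c|=k+1,\,c\in\cF}\binom{|\Delta_k(c)\cap\cF|}{2}$, and using the expansion $\binom{k+1-x}{2}=\binom{k+1}{2}-x(2k+1-x)/2$ together with a double count swapping missing middle sets with their non-middle supersets yields, for $\cF_1\subseteq\binom{[n]}{k+2}$,
\[
B_1\ge (E+m)f(n)-\binom{k+1}{2}(n-k-1)m^+-(2k+1)\binom{n-k}{2}m^-,
\]
where $m=m^++m^-$ splits the missing middle sets between sizes $k+1$ and $k$. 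When $m^-=0$ the coefficient $\binom{k+1}{2}(n-k-1)$ is smaller than $f(n)$ by $\Omega(k^2)$, so $B_1\ge Ef(n)+\Omega(mk^2)\ge Ef(n)$; the symmetric calculation for $\cF_1\subseteq\binom{[n]}{k-1}$ (or for $n$ odd, after complementing $\cF$) covers the case $m^+=0$.

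\textbf{Main obstacle.} The genuinely resistant case is when both $m^\pm>0$ and $E$ is close to $K(n,k+2)$, because the coefficient on $m^-$ in the displayed inequality is roughly $2f(n)$, strictly larger than $f(n)$, so the singleton count alone cannot close the gap. To handle this I plan to include butterflies with two non-middle members: any pair $F_1,F_2\in\cF_1\cap\binom{[n]}{k+2}$ with $|F_1\cap F_2|=k+1$ contributes $\binom{k+1-O(1)}{2}$ additional butterflies whose bottoms lie in $\cF\cap\binom{[n]}{k}$, and the \emph{number} of such overlapping pairs is forced to be large in proportion to how far $\cF_1$ deviates from a $K(n,k+2)$-packing; the isoperimetric bound of \pref{isoperi} in the Hamming graph $H(n,k+2)$ is the natural tool for quantifying this. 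The delicate step I expect to be the main obstacle is to combine these overlap bonuses with the singleton count into a single inequality that always yields $\ge Ef(n)$ butterflies, especially when $\cF_1$ contains sets of both sizes $k+2$ and $k-1$; a shifting/compression argument on $\cF_1$ reducing any extremal configuration to the packing construction of \pref{constr}(b) without increasing the butterfly count appears to be the cleanest route.
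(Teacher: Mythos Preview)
The statement you are addressing is \emph{Conjecture}~\ref{conj:conj}; the paper does not prove it and in fact explicitly remarks that ``we do not see how to get anywhere near $K(n,\lceil n/2\rceil+1)$ even for the asymptotic statement.'' There is therefore no proof in the paper to compare your attempt against, and what you have submitted is (as you yourself label it) a plan with a declared unresolved obstacle, not a proof.

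Beyond the obstacle you already flag, there is a more basic gap in the plan. Your reductions for the large- and intermediate-$m$ regimes invoke \tref{butterflystab}, \tref{wrongsets} and \cref{wrongsetsplus}, but all of these carry size hypotheses of the type $\log m=o(n)$ (equivalently $m\le\binom{\varepsilon n}{\varepsilon n/2}$) or $m\le k^2$. When $E$ is allowed up to $K(n,\lceil n/2\rceil+1)=2^{(1-o(1))n}$, the threshold $6Ef(n)$ already exceeds $\Sigma(n,2)$, so the ``large-$m$'' case is vacuous, and the relevant values of $m$ can themselves be of order $2^{(1-o(1))n}$, well outside the range where any of the cited lemmas apply. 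In other words, the case split you import from the proof of \tref{main}(b) does not even cover the parameter range of the conjecture; new stability and isoperimetric inputs valid for $m$ up to a constant fraction of $\binom{n}{\lfloor n/2\rfloor}$ would be needed before your small-$m$ analysis becomes the bottleneck.

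Your small-$m$ outline also underestimates the difficulty. Even with $m=0$ and $\cF_1\subseteq\binom{[n]}{k+2}$, once $|\cF_1|=E$ exceeds $K(n,k+2)$ by even one set you must produce \emph{extra} butterflies from overlapping pairs, and the conjecture only goes up to $E=K(n,k+2)$ precisely because at that point the packing construction is tight; but for $E$ just below $K(n,k+2)$ and $m>0$ you have $|\cF_1|=E+m$ possibly exceeding $K(n,k+2)$, and the number and structure of forced overlaps in $H(n,k+2)$ at that density is itself an open-ended isoperimetric question that \pref{isoperi} (which only gives $o(mn^2)$ edges for subexponential families) does not resolve. The ``shifting/compression'' route you suggest would need to show that shifting never increases the butterfly count, which is far from clear since shifting can create new containments among the non-middle sets.
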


If $k$ tends to infinity, determining the asymptotics of $K(n,k)=\max_{\cG \subset \binom{[n]}{k}}\{|\cG|: |G_1 \cap G_2|\le k-2 \hskip 0.2truecm\forall G_1,G_2 \in\cG\}$ is one of the most important open problems in coding theory \cite{CHLL}. When $k$ is roughly $n/2$, then by a trivial volume argument one has that $K(n,k) \ge (4-o(1))\binom{n}{k}/n^2$ and by a probabilistic argument one can obtain $K(n,k) = O(\frac{\binom{n}{k}\log n }{n^2})$.
The bounds in almost all the steps towards \tref{main} can be improved with a little work, but we do not see how to get anywhere near $K(n,\lceil n/2\rceil +1)$ even for the asymptotic statement.

\vskip 0.2truecm

An important step in the proof of \tref{butterflystab} is \cref{butt} that bounds the size of a butterfly-free family as a function of the number of missing middle sets. The bound of \cref{butt} is most probably very far from being sharp. We propose the following conjecture.

\begin{conjecture}
Let $\cF \subseteq 2^{[n]}$ be a butterfly-free family with $\emptyset, [n] \notin\cF$ and let us write $\cM=\{M \in \cF: \exists F,F' \in \cF \hskip 0.3truecm F \subset M \subset F'\}$. Then $|\cF| \le \Sigma(n,2)-\Omega(n|\cM|)$ if $|\cM|$ is polynomial in $n$.
\end{conjecture}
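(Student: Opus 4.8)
The plan is to deduce the bound from the $2$-Sperner stability already in hand. Write $\cF_0=\cF\setminus\cM$ and observe that $\cF_0$ is $2$-Sperner, since any $3$-chain inside $\cF_0$ would have its middle set in $\cM$. As $|\cF|=|\cF_0|+|\cM|$, it suffices to prove that $\cF_0$ is ``$\Omega(|\cM|)$-far'' from the extremal configurations, i.e.\ $|\cF_0\setminus\cF^*|\ge c\,|\cM|$ for every $\cF^*\in\Sigma^*(n,2)$ and an absolute $c>0$: by \lref{spernerstab} this gives $|\cF_0|\le\Sigma(n,2)-g(\lceil n/2\rceil+1,c|\cM|)$, and using the identity $g(l,m)=\frac{2l-1-x(l,m)}{x(l,m)-l+1}\,m$ from the proof of \pref{change} together with $x(\lceil n/2\rceil+1,c|\cM|)=\lceil n/2\rceil+1+O(1)$ whenever $|\cM|$ is polynomially bounded, this deficit is of order $n|\cM|$, whence $|\cF|\le\Sigma(n,2)-\Omega(n|\cM|)$. (If one only assumes $\log|\cM|=o(n)$ the same scheme still yields a deficit $n^{1-o(1)}|\cM|$, already almost a full factor $n$ better than \cref{butt}, which is presumably the intended reading of ``not too large''.)

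To prove the farness of $\cF_0$, recall from (the proof of) \lref{imprlym} that every $M\in\cM$ has unique $F_{1,M}\subsetneq M\subsetneq F_{2,M}$ in $\cF$, that $F_{1,M}$ is minimal and $F_{2,M}$ maximal in $\cF$ (a $4$-chain under a common top, or over a common bottom, already carries a butterfly), and that $[F_{1,M},F_{2,M}]$ meets $\cF$ only in $\{F_{1,M},M,F_{2,M}\}$. I would first peel off the ``atypical'' $M$: those with $|F_{2,M}|-|F_{1,M}|\ge 3$, for which the interval already misses $2^{|F_{2,M}|-|F_{1,M}|}-3$ sets, and those with $F_{1,M}$ or $F_{2,M}$ at level distance more than $\log n$ from $n/2$, for which a shadow/shade estimate in the style of \lref{genshadow} and \pref{change} shows a minimal (resp.\ maximal) set so far from the centre forces a number of deletions growing with that distance; a dispersion count bounds how many atypical $M$ can be blamed on one missing set, so their combined contribution to $|\cF_0\setminus\cF^*|$ already exceeds a constant times their number. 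For the remaining ``typical'' $M$, whose triple occupies three consecutive levels $\ell-1,\ell,\ell+1$ with $\ell$ within $\log n$ of $n/2$, I cluster them by the value of $F_{2,M}$ (dually $F_{1,M}$): inside a fixed $F_2$, the $\ell$-subsets lying in $\cM$ each have a \emph{unique} $(\ell-1)$-subset in $\cF$, so each contributes $\ell-1$ missing $(\ell-1)$-subsets of $F_2$, and since a fixed $(\ell-1)$-subset of $F_2$ is below at most two $\ell$-subsets of $F_2$, a cluster of size $j$ forces $\Omega(jn)$ \emph{distinct} missing $(\ell-1)$-sets inside $F_2$ (and dually $\Omega(jn)$ missing $(\ell+1)$-supersets of $M$). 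Choosing, for each cluster, whichever of the directions $\ell-1$ or $\ell+1$ meets the two levels of $\cF^*$ puts these missing sets in $\cF^*$, hence in $\cF^*\setminus\cF_0$; in the (few, by \cref{weakstab}-type LYM arguments once $|\cM|$ is controlled) clusters whose whole triple avoids the levels of $\cF^*$ one instead uses the single sets $F_{2,M}$ or $F_{1,M}$ themselves, which lie in $\cF_0$ off the levels of $\cF^*$, as back-up witnesses.

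The step I expect to be the real obstacle is the bookkeeping that converts these pointwise witnesses into a genuine global deficit: a single missing near-middle set $H$ can a priori be produced by clusters $\cM_{F_2}$ for many different maximal sets $F_2\supseteq H$, so summing the per-cluster counts $\Omega(j_c n)$ over the partition of $\cM$ into clusters overcounts. The right way to beat this is probably to weight each missing set by the number of clusters charging it and to bound the total weight by $O(|\cM|)$, using that a missing set $H$ sitting in many clusters means $\cF$ contains many pairwise-incomparable maximal sets just above $H$ — a rigid configuration that, being butterfly-free, is itself forced to carry $\Omega(n)$ further missing sets per such maximal set, so the heavy charging can be redistributed. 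Making this redistribution precise, and quantifying ``$|\cM|$ not too large'' so that all the error terms in the atypical-set reduction and in the LYM side arguments are genuinely $o(n|\cM|)$, is where the bulk of the work lies.
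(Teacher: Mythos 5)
This statement is one of the \emph{conjectures} in the paper's concluding remarks; the paper does not prove it. What the paper does prove is the much weaker \cref{butt}, $|\cF|\le\Sigma(n,2)-|\cM|/3$, and it explicitly flags the $\Omega(n|\cM|)$ strengthening as open. So there is no paper proof to compare against; what you have written is a sketch of a possible attack on an open problem, and it has two real gaps.

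First, there is a logical mismatch in the reduction to \lref{spernerstab}. You set out to prove $|\cF_0\setminus\cF^*|\ge c|\cM|$, which is the hypothesis that lemma needs (it is phrased in terms of sets of $\cF_0$ lying \emph{off} the two middle levels). But the witnesses your clustering argument actually manufactures are almost entirely \emph{missing} middle-level sets, i.e.\ elements of $\cF^*\setminus\cF_0$; these live on the wrong side of the symmetric difference, and the two quantities are genuinely different — $|\cF_0\setminus\cF^*|=|\cF_0|-\Sigma(n,2)+|\cF^*\setminus\cF_0|$, so a large $|\cF^*\setminus\cF_0|$ tells you nothing about $|\cF_0\setminus\cF^*|$ unless you already know $|\cF_0|$ is not small. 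The clean repair is to target $|\cF^*\setminus\cF_0|-|\cF_0\setminus\cF^*|=\Omega(n|\cM|)$ directly (which by itself bounds $|\cF_0|$, no stability lemma needed), or to run a dichotomy: if $|\cF_0\setminus\cF^*|$ is already $\Omega(n|\cM|)$ feed that into \lref{spernerstab}, otherwise the displayed identity finishes. Your ``back-up witnesses'' $F_{1,M},F_{2,M}$ are indeed in $\cF_0\setminus\cF^*$ (when they miss the $\cF^*$ levels), but since many $M$ share the same top and bottom these alone do not give $\Omega(|\cM|)$ distinct witnesses, so they cannot carry the load.

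Second — and you flag this yourself — the redistribution step that handles a missing set $H$ charged by many clusters $\cM_{F_2}$ is not a bookkeeping afterthought; it is the whole problem, and the mechanism you gesture at is circular. You argue that if $H$ is heavily charged then $\cF$ contains many pairwise-incomparable maximal $F_2$'s just above it, and that this ``rigid configuration'' forces $\Omega(n)$ further deletions per $F_2$; but those per-$F_2$ batches of deletions can overlap with one another in exactly the same way the original batches did, so you have only pushed the overcounting one step down. Also, your parenthetical structural claim that $[F_{1,M},F_{2,M}]$ meets $\cF$ only in $\{F_{1,M},M,F_{2,M}\}$ is not quite right: an $N$ incomparable to $M$ with $F_{1,M}\subsetneq N\subsetneq F_{2,M}$ produces a diamond $D_2$, not a butterfly, so such $N$ can coexist and must be tracked. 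Some genuinely global mechanism — a Kruskal--Katona/shadow-compression step in the style of \lref{genshadow} and \pref{change} applied to all of $\cM$ at once, rather than a local charging scheme over clusters — looks necessary, and that is not yet in your proposal. As written this is an honest and reasonable plan of attack, not a proof.
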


The exact value of $La(n,P)$ is known for very few posets $P$. There is a general conjecture \cite{GL,GrL} that $La(n,P)/\binom{n}{\lfloor n/2 \rfloor}$ tends to an integer $e(P)$, where the value of $e(P)$ is defined to be the largest integer such that a family from $\Sigma^*(n,e(P))$ is $P$-free. One of the nicest results in the area was proved by Bukh \cite{B} and states that this conjecture is true for posets $T$ of which the \textit{Hasse diagram} is a tree (the Hasse diagram of a poset $P$ is the directed graph of which the vertices are elements of $P$ and $p$ and $q$ are joined by an edge directed towards $q$ if $p$ preceeds $q$, i.e. $p<_P q$ and there does not exist  $z\in P$ with $p<_P z <_P q$). Bukh's result was extended by Boehnlein and Jiang \cite{BJ} to \textit{induced} $T$-free families, i.e. families $\cF$ for which there does not exist an injection $i:T \rightarrow \cF$ with $t_1\le_T t_2$ if and only if $i(t_1) \subseteq i(t_2)$. Forbidden induced subposet results are very rare. Boehlein and Jiang's theorem suggests that tree posets are easier to handle than general posets. As the exact value of $La(n,T)$ is not known even for tree posets $T$, it is not easy to formulate a conjecture on the minimum number of copies of $T$ in a family of size $La(n,T)+E$, but maybe the following structural statement can have a pushing-to-the-middle proof that is not very complicated.

\begin{conjecture}
Let $T$ be a poset of which the Hasse diagram is a tree. Then there exists a constant $c=c(T)$ with $0<c\le 1$ such that if $E \le c\binom{n}{\lfloor n/2\rfloor}$, then there exists a family $\cF\subseteq 2^{[n]}$ that minimizes the number of copies of $T$ over families of size $La(n,T)+E$ and contains sets only of $e(T)+1$ different sizes and the set sizes are consecutive integers.
\end{conjecture}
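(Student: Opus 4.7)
The plan is to start from an arbitrary minimizer $\cF_0 \subseteq 2^{[n]}$ of size $La(n,T)+E$ and transform it into a family supported on $e(T)+1$ consecutive layers by a sequence of \emph{swap moves} that never increase the number of copies of $T$. I would proceed in three steps, parallel to the proof of \tref{main}.

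First, \textbf{fix the target band}. Let $\cB = \{L, L+1, \ldots, L+e(T)\}$ be the block of $e(T)+1$ consecutive layers symmetric about $n/2$ (with a tie-break dictated by the structure of $T$ when $n+e(T)$ is odd). The extremal $T$-free families in $\Sigma^*(n,e(T))$ live on the middle $e(T)$ of these layers by Bukh's theorem, and the remaining layer of $\cB$ plays the role that $\binom{[n]}{\lceil n/2 \rceil + 1}$ plays in \pref{constr}. The choice $c(T) \le 1/(e(T)+1) - o(1)$ guarantees that $\cB$ has enough room for a family of size $La(n,T)+E$, since $\sum_{\ell \in \cB}\binom{n}{\ell}\sim (e(T)+1)\binom{n}{\lfloor n/2 \rfloor}$.

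Second, I would prove a \textbf{cost lemma}: if $F \in \cF$ has $|F|$ at distance $d \ge 1$ from $\cB$, then $F$ participates in at least $\psi_T(n,d)$ copies of $T$ in $\cF$, with $\psi_T(n,d)$ strictly increasing in $d$ and of order $d\cdot n^{|T|-1}$. The argument would follow the proof of \tref{main}(b): iterated applications of \lref{genshadow} combined with a $T$-variant of \tref{wrongsets} show that most sets of $\cF$ in the middle layers already contain, or are contained in, almost all of their natural shadow/shade within $\cF$, and a distant set $F$ therefore roots many embeddings of the tree $T$ obtained by descending (or ascending) through Hasse edges of $T$.

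Third, I would prove a \textbf{swap lemma and iterate}. For $F \in \cF$ with $|F| \notin \cB$, consider the average number of $T$-copies containing a candidate $F^* \in \binom{[n]}{\ell}\setminus \cF$ with $\ell \in \cB$ in the modified family $(\cF\setminus\{F\})\cup\{F^*\}$, over all such candidates $F^*$. This mean is at most the total number of $T$-copies in $\cF$ using some set of size $\ell$, divided by $\binom{n}{\ell} - |\cF\cap \binom{[n]}{\ell}|$. When $E \le c(T)\binom{n}{\lfloor n/2\rfloor}$ with $c(T)$ sufficiently small, this average is dominated by $\psi_T(n,d)$ from the cost lemma, so some $F^*$ satisfies $N_T(F^*,(\cF\setminus\{F\})\cup\{F^*\}) \le N_T(F,\cF)$, where $N_T(\cdot,\cdot)$ counts the $T$-copies through the first argument. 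Performing the swap strictly reduces $\sum_{F\in\cF}\mathrm{dist}(|F|,\cB)$, so after finitely many steps we arrive at a minimizer supported on $\cB$.

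The main obstacle is the swap lemma. The averaging argument I sketched is clean only if $\cF$ is already close to the union of middle layers plus $E$ extras, which amounts to a stability statement for general tree posets $T$. Extending \tref{butterflystab} to arbitrary tree posets seems nontrivial, because the sharp LYM-type inequality \lref{imprlym} leaned heavily on the shape of the butterfly, and a replacement must simultaneously account for the many ways $T$ can be embedded. A possible workaround is to prove a direct pointwise inequality between $N_T(F,\cF)$ and $N_T(F^*,\cF)$ via a poset-theoretic coupling of the sets, bypassing the averaging step; however, no such coupling is visible from the tree structure of $T$ alone, and this is the point at which I expect the proof to turn technical.
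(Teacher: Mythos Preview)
The statement you are attempting to prove is stated in the paper as a \emph{conjecture}, not a theorem; it appears in the concluding remarks section and the paper offers no proof. There is therefore nothing in the paper to compare your proposal against, and any complete argument would constitute a new result.

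That said, your outline has genuine gaps beyond the one you flag. First, your Step~1 assumes that the extremal $T$-free families lie in $\Sigma^*(n,e(T))$, citing Bukh's theorem. But Bukh only proves the asymptotic $La(n,T)=(e(T)+o(1))\binom{n}{\lfloor n/2\rfloor}$; the paper explicitly notes that ``the exact value of $La(n,T)$ is not known even for tree posets $T$''. Without knowing the exact extremal families, you cannot even be sure that a minimizer of size $La(n,T)+E$ contains a near-copy of $\Sigma^*(n,e(T))$, which your cost and swap lemmas both presuppose. Second, your cost lemma claims that a set at distance $d$ from the band lies in $\Omega(d\cdot n^{|T|-1})$ copies of $T$, but this is not what the analogous argument in the paper yields even for the butterfly: there, every extra set contributes $\Theta(n^3)$ copies regardless of its distance from the middle, and the gain from distance is absorbed into lower-order terms. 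For a general tree $T$ the dependence on $d$ is unclear and may not be monotone in the way you need.

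Finally, you correctly identify the swap lemma as the crux, and your own assessment is accurate: the averaging step requires a stability theorem for $T$-free families analogous to \tref{butterflystab}, and no such result is available. The LYM-type refinement in \lref{imprlym} exploits the specific fact that in a butterfly-free family every ``middle'' set has a unique set above and below it in $\cF$; there is no obvious substitute for an arbitrary tree. Your proposal is thus a reasonable roadmap, but it reduces the conjecture to two other open problems (exact extremal families and stability for tree posets) rather than resolving it.
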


Let us comment on the whole area of forbidden subposet problems. Almost all proofs of the known results use double counting or a variant of Katona's permutation method. A nice consequence of this is that these proofs are very elegant and relatively easy to present even to an audience less familiar with extremal set systems. However, using only these tools the above mentioned conjecture, $La(n,P)/\binom{n}{\lfloor n/2 \rfloor}\rightarrow e(P)$, seems out of reach. It would be foolish to think that the Kleitman-West problem will turn out to be helpful in proving this conjecture. However, it is very likely that establishing relations to other extremal problems can be of great use. This is what Methuku and P\'alv\"olgyi did \cite{MP} when they proved their recent result that for every poset $P$ there is a constant $c_P$ such that every \textit{induced} $P$-free family $\cF \subseteq 2^{[n]}$ we have $|\cF| \le c_P\binom{n}{n/2}$. Finally, if one tries to find an inductive proof to the $La(n,P)/\binom{n}{\lfloor n/2 \rfloor}\rightarrow e(P)$ conjecture, then one might try to use supersaturation results: if by induction we know that a family $\cF$ contains \textit{many} copies of some $P' \subseteq P$, then we could try to glue some of these copies together to obtain a copy of $P$ in $\cF$.

\section*{Acknowledgement}
Research supported by the J\'anos Bolyai Research Scholarship of the Hungarian Academy of Sciences. I would like thank three anonymous referees for their thorough reading and their remarks that helped to improve the presentation of the paper.

\end{document}